\newtheorem{theorem}{Theorem}[section]
\newtheorem{proposition}[theorem]{Proposition}
\newtheorem{definition}[theorem]{Definition}
\newtheorem{example}[theorem]{Example}
\newcommand{\N}{\mathcal{N}}
\newcommand{\NN}{\mathbb{N}}
\newcommand{\id}{\textup{id}}
\newcommand{\nulll}{\textup{null}}
\newcommand{\im}{\textup{im}}
\newcommand{\m}{\mathbf{m}}
\newcommand{\sub}{\mathcal{S}}
\newcommand{\Res}{\textup{Res}}
\newcommand{\reg}{\textup{reg}}
\newcommand{\rank}{\textup{rank}}
\newcommand{\diag}{\textup{diag}}
\newcommand{\I}{I}
\newcommand{\Rh}{S}
\newcommand{\OO}{\mathcal{B}}
\newcommand{\Z}{\mathbb{Z}}
\newcommand{\C}{\mathbb{C}}
\newcommand{\PP}{\mathbb{P}}
\newcommand{\V}{\mathbb{V}}
\newcommand{\1}{\mathbf{1}}
\newcommand{\x}{x}
\newcommand{\D}{\delta}
\newcommand{\R}{\mathbb{R}}
\newcommand{\Vol}{\textup{Vol}}
\newcommand{\Span}{\textup{span}}
\newcommand{\MV}{\textup{MV}}
\newcommand{\ideal}[1]{\langle{#1}\rangle}
\pgfplotsset{
  log x ticks with fixed point/.style={
      xticklabel={
        \pgfkeys{/pgf/fpu=true}
        \pgfmathparse{exp(\tick)}%
        \pgfmathprintnumber[fixed relative, precision=3]{\pgfmathresult}
        \pgfkeys{/pgf/fpu=false}
      }
  }}
\begin{document}
\title{Solving Polynomial Systems via a Stabilized Representation of Quotient Algebras}
\author{Simon Telen, Bernard Mourrain, Marc Van Barel\thanks{Supported by
  the Research Council KU Leuven,
PF/10/002 (Optimization in Engineering Center (OPTEC)),
C1-project (Numerical Linear Algebra and Polynomial Computations),
by
the Fund for Scientific Research--Flanders (Belgium),
G.0828.14N (Multivariate polynomial and rational interpolation and approximation),
   and by
   the Interuniversity Attraction Poles Programme, initiated by the Belgian State,  Science Policy Office,
   Belgian Network DYSCO (Dynamical Systems, Control, and Optimization).
}}
\maketitle

\begin{abstract}
We consider the problem of finding the isolated common roots of a set of polynomial functions defining a zero-dimensional ideal $I$ in a ring $R$ of polynomials over $\C$. We propose a general algebraic framework to find the solutions and to compute the structure of the quotient ring $R/I$ from the null space of a Macaulay-type matrix. The affine dense, affine sparse, homogeneous and multi-homogeneous cases are treated. In the presented framework, the concept of a border basis is generalized by relaxing the conditions on the set of basis elements. This allows for algorithms to adapt the choice of basis in order to enhance the numerical stability. We present such an algorithm and show numerical results.
\end{abstract}

\section{Introduction}
There exist several methods to find all the roots of a set of polynomial equations \cite{sturmfels2,cattani2005solving}. The most important classes are homotopy continuation methods \cite{bates2013numerically,verschelde1999algorithm}, subdivision methods \cite{mourrain2009subdivision} and algebraic methods \cite{emir1,mvb,cox2,dreesen2012back,mourrain1999new,telen2017stabilized}. In this paper, we focus on the latter class of solvers.

These methods perform linear algebra operations on vector subspaces of
the ideal generated by the set of equations to deduce the algebraic
structure of the quotient algebra of the polynomial ring by the ideal.
One can find the roots of these techniques in ancient works on
resultants by B\'ezout, Sylvester, Cayley, Macaulay\dots. Explicit
constructions of matrices of polynomial coefficients are exploited to
compute projective resultants of polynomial systems
(see e.g., \cite{macaulay_formulae_1902}). These matrix constructions have been
further investigated to compute other types of resultants such as toric
or sparse resultants \cite{cox2, emir1, dandrea_poisson_2015} or
residual resultants \cite{buse_resultant_2001}. See e.g. \cite{emiris_matrices_1999} for
an overview of these techniques. These matrices are also exploited in
numerical linear algebra-based methods for finding the solutions of
the polynomial equations from their null space \cite{dreesen2012back,
  telen2017stabilized}.

Another well-established approach to describe the quotient algebra structure is by
computing Groebner bases for a given monomial ordering \cite{cox1}. The initial
algorithms based on rewriting techniques have been enhanced by
introducing linear algebra tools \cite{faugere1999new,
  eder_signature-based_2011}. H-bases, initiated by F.S. Macaulay,
have also been investigated to construct ideal bases, with interesting
projection properties to compute normal forms and to describe quotient
algebras \cite{moller_h-bases_2000}. To avoid the numerical
instability induced by monomial orderings in Groebner bases
computations, border bases have been developed to combine robustness
and efficiency \cite{mourrain1999new, mourrain_generalized_2005,
 mourrain_stable_2008}.

These methods proceed incrementally by performing linear algebra
operations on monomial multiples of polynomials computed at the
previous step, until a reduction or a commutation property is
satisfied. The sizes of the matrices involved in these computations
are usually smaller than the size of resultant matrices (see
e.g. \cite{mourrain_solving_2000}). Because of the incremental nature
of these methods, the computed bases describing the quotient algebra
structure may not be optimal, from a numerical point of view.

The framework we consider in this paper is related to the construction
of ideal interpolation (or normal form).  In \cite{de_boor_ideal_2004,
  de_boor_ideal_2006}, the problem of characterizing when a linear
projector is an ideal projector, that is when the kernel of the
projector is an ideal, is investigated.  The conditions of commutation
and the connectivity property of the basis proposed in 
\cite{mourrain1999new} are discussed and compared to some variants.

In this paper, we propose a new method to compute the solutions and
the algebra structure of the coordinate ring from the null space of
Macaulay-type matrices. Such a null space is the orthogonal space of a
vector subspace of the ideal $I$ generated by the set of polynomial
equations. We give new conditions on this null space (Theorem
\ref{thm:affine}) under which the quotient algebra structure can be
recovered and propose a new method to compute it and to solve the
polynomial equations. Implicitly, this generalizes the concept of
border bases \cite{mourrain1999new, mourrain_generalized_2005} by
relaxing the conditions of connectivity on the basis. It also
characterizes when the null space defines a projector, which is the
restriction of an ideal projector, as studied in
\cite{de_boor_ideal_2004, de_boor_ideal_2006}.

We show how to construct such matrices and to find the roots for generic dense,
sparse, homogeneous and multi-homogeneous systems. For homogeneous
systems, these conditions lead to a new criterion of regularity of the
ideal $I$ (Proposition \ref{prop:regularity}), extending in the
zero-dimensional case, the criterion proposed in
\cite{bayer1987criterion}.

In \cite{telen2017stabilized} it is shown that the choice of basis of
the coordinate ring is crucial to the numerical stability of algebraic
solving methods.  In the framework we propose here, an algorithmic
`good' choice of basis can be made.

The methods we propose in this paper are numerical linear algebra
methods using finite precision arithmetic. Groebner bases methods
require symbolic computation because they are unstable. This makes
these methods unfeasible for large systems. We compare our algorithms
to homotopy continuation methods in double precision, because these
methods are known to be successful numerical solvers
\cite{bates2013numerically,verschelde1999algorithm}.
However, we show in our numerical experiments that these methods
do not guarantee that all solutions are found. On the contrary, our
methods do find all solutions under some genericity assumptions, and
they are competitive in speed when the number of variables is not too
large.

Throughout the paper, we assume zero-dimensionality of the ideal
generated by the input equations.
% The meaning of this assumption changes throughout the sections.
We start with a motivation in the next section. In Section
\ref{sec:affine} we treat the rootfinding problem in affine space. We
assume that the number of solutions in $\C^n$ is finite. Theorem
\ref{thm:affine} is the main theorem of the paper, since the results
in other sections follow from it. In the case of a dense set of
equations, the approach in \cite{dreesen2012back} follows from Theorem
\ref{thm:affine}. Section \ref{sec:toric} deals with the toric case:
we assume a finite number of solutions in the algebraic torus
$(\C^*)^n$. In Section \ref{sec:proj}, we consider the case of dense
systems in a projective setting. We use the framework to compute a
representation of the degree $\rho$ part of the quotient algebra,
where $\rho$ is the regularity of the ideal. We assume a finite number
of solutions in $\PP^n$. Section \ref{sec:multihom} treats the
multihomogeneous case, where we have $\D$ solutions in
$\PP^{n_1} \times \cdots \times \PP^{n_k}$. In every setting, we
present an appropriate Macaulay-type matrix to work with in the case
of a square system (as many equations as the dimension of the solution
space). In Section \ref{sec:tablestosolutions} we elaborate on how to
find the solutions from a representation of the quotient
algebra. Finally, in Section \ref{sec:numexp} we show some numerical
examples. We assume that the number of solutions, counting
multiplicities, is $\D$ and we denote them by
$z_i, i = 1,\ldots,\D \in \star$ where $\star$ is the solution space.

\section{Normal forms in an Artinian ring}
Let $R = \C[x_1,\ldots, x_n]$ be the ring of polynomials in the
variables $x_{1},\ldots,x_{n}$ with coefficients in the field $\C$ and
take $I \subset R$ defining $\D < \infty$ points, counting
multiplicities, such that $R/I$ is Artinian. Equivalently, we assume
that $\dim_{\C}(R/I)=\delta$. 
\begin{definition}
A \textup{normal form} on $R$ w.r.t. $I$ is a linear map $\N: R \rightarrow B$ where $B \subset R$ is a vector subspace of dimension $\D$ over $\C$ such that 
\begin{center}
\begin{tikzcd}[column sep = scriptsize, row sep = 0.2 cm]
    0 \arrow{r} & I \arrow{r} & R \arrow{r}{\N} & B \arrow{r} & 0\\
\end{tikzcd}
\end{center}
is exact and $\N_{|B} = \id_B$.
\end{definition}
In \cite{de_boor_ideal_2004}, $\N$ is also called an ideal projector.

Let $\N$ be a normal form on $R$ w.r.t. $I$. We restrict $\N$ to a subspace $V \subset R$ such that $B \subset V$ and $x_i \cdot B \subset V, i = 1,\ldots,n$. Let $P : B \rightarrow \C^\D$ be an isomorphism defining coordinates on $B$. Defining $N = P \circ \N$ and $N_i : B \rightarrow \C^\D$ given by $N_i(b) = N(x_i \cdot b)$, we have the following facts: 
\begin{enumerate}
\item $\ker(N) = I \cap V$,
\item $N_{|B} = P$,
\item $m_{x_i}(b) = \N(x_i \cdot b) = (P^{-1} \circ N_i)(b)$.
\end{enumerate}
Notice that an $N$ satisfying these properties, is of the form $N: f\in V \rightarrow N(f) = (\eta_1(f), \ldots, \eta_\D(f)) \in \C^{\D}$ with $\eta_i \in V^* \cap I^{\perp} =\{ \lambda\in V^*\mid \forall p \in I\cap V, \lambda(p)=0\}$. In other words, $N$ is given  by $\D$ linear forms, which vanish on $I\cap V$.

In this paper we focus on the problem of characterizing when a map $N$
given by $\D$ linear forms
$\eta_{1},\ldots, \eta_{\D}\in V^* \cap I^{\perp}$ is the restriction
of a normal form w.r.t $I$. Given an ideal $I \subset R$ defining an
Artinian algebra $R/I$ of dimension $\D$ and a linear map
$N: V \rightarrow \C^\D$ such that $\ker N\subset I$, we want to
determine when there exists a normal form $\N$ w.r.t. $I$, which by
restriction gives $N$.  More precisely, we determine necessary and
sufficient conditions on $N$ and $V$ such that $N = P \circ \N$, where
$\N$ is the restriction to $V$ of a normal form w.r.t. $I$,
and where $P = N_{|B}$ is invertible with $B \subset V$ such that
$x_i \cdot B \subset V, i = 1,\ldots, n$.

It is clear that if we have this property and we can compute $P$, then the algebra structure of $R/I$ is defined by the multiplication tables $P^{-1} \circ N_i$ and we can use this to find the roots of $I$.

In the following sections, we show how the result can be applied in the affine, toric, homogeneous and multihomogenous setting and propose a numerical construction of $N$ in the case where $I$ is a complete intersection. 

\section{Ideals defining points in $\C^n$} \label{sec:affine}
Denote $R = \C[x_1,\ldots, x_n] = \C[\x]$. We consider a 0-dimensional
ideal $I = \ideal{ f_1, \ldots, f_s } \subset R$ generated by
$s$ polynomials $f_{1},\ldots, f_{s}$ in $n$ variables with $\D < \infty$ solutions in $\C^n$, counting multiplicities. Let $V$ be the vector space of polynomials in $R$ supported in some finite subset $\sub$ of $\NN^n$
$$ V = \bigoplus_{\alpha \in \sub} \C \cdot x^\alpha \subset R,$$
such that $V$ contains at least one unit $u$ in $R/I$ (for instance $u$ could be $1$).
Suppose we have a linear map 
$$N :  V \longrightarrow \C^\D,$$
such that $\ker(N) \subset I \cap V$. For an ideal $J\subset R$ and
$p\in R$, we denote $(J:p)=\{q \in R\mid pq \in J \}$ and $(J:p^{*})=\{q \in R\mid
\exists k\in \NN\ s.t.\ p^{k}q \in J \}$.

\begin{theorem} \label{thm:affine}
Assume that $\dim_{\C}R/I=\D$, that $\ker (N)\subset I \cap V$ and that $V$ contains an element $u$ invertible in $R/I$.
If there is a vector subspace $W \subset V$ such that $x_i \cdot W \subset V, i = 1, \ldots, n$ and for the restriction of $N$ to $W$ we have $\rank(N_{|W}) = \D$, then for any vector subspace $B \subset W$ such that $W = B \oplus \ker(N_{|W})$, we have: 
\begin{enumerate}[(i)]
\item $N^* = N_{|B}$ is invertible, \label{it1}
\item there is an isomorphism of $R$-modules $B \simeq R/I$,  
\item $V= B \oplus V\cap I$ and $I=(\ideal{ \ker (N) }:u)$,
\item the maps $N_i$ given by \begin{alignat*}{2}
	 N_i : ~ & B \longrightarrow \C^\D,  \\
	 &b \longrightarrow N(x_i\cdot b) 
\end{alignat*}
for $i = 1,\ldots,n$ can be decomposed as $N_i = N^* \circ m_{x_i}$
where $m_{x_i}: B \rightarrow B$ define the multiplications by $x_i$
in $B$ modulo $I$ and are commuting ($m_{x_{i}}\circ\, m_{x_{j}}=
m_{x_{j}}\circ\, m_{x_{i}}$ for $1\le i<j\le n$). 
\end{enumerate}
\end{theorem}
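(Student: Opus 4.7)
The plan is to treat (i) first by linear algebra: the decomposition $W = B \oplus \ker(N_{|W})$ together with $\rank(N_{|W}) = \D$ forces $N^{*} = N_{|B}$ to be a bijection onto $\C^{\D}$, so in particular $\dim B = \D$ and $B \cap \ker N = 0$. The heart of the theorem is then to show that the canonical map $\pi_{|B} : B \to R/I$, where $\pi : R \to R/I$ is the quotient, is a vector-space isomorphism. My auxiliary object is the operator $m_{x_i} : B \to B$ defined by $m_{x_i}(b) := (N^{*})^{-1}(N(x_i b))$; this is well defined because $x_i b \in V$ by the hypothesis $x_i W \subset V$. By construction $x_i b - m_{x_i}(b) \in \ker N \subset I$, which shows that $\pi(B) \subset R/I$ is closed under multiplication by every $x_i$ and is therefore an ideal. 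The element $b_u := (N^{*})^{-1}(N(u)) \in B$ satisfies $u - b_u \in \ker N \subset I$, so $\pi(b_u) = \pi(u)$; since $u$ is a unit modulo $I$, so is $\pi(b_u)$, and an ideal of $R/I$ containing a unit must be all of $R/I$. Thus $\pi(B) = R/I$, and a dimension count makes $\pi_{|B}$ an isomorphism; in particular $B \cap I = 0$, which is the consequence that will unlock the remaining parts.

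From this point the other statements fall out. For (ii), transfer the $R$-module structure from $R/I$ to $B$ through $\pi_{|B}$; the identity $\pi(m_{x_i}(b)) = \pi(x_i b) = x_i \pi(b)$ shows that the $m_{x_i}$ correspond to the multiplications by $x_i$ on $R/I$, and they commute because their counterparts on $R/I$ do. For (iii), given $p \in V$ set $b := (N^{*})^{-1}(N(p)) \in B$; then $p - b \in \ker N \subset V \cap I$, so $V = B + (V \cap I)$, and the sum is direct because $B \cap I = 0$. The same argument shows $\ker N = V \cap I$. Part (iv) is then immediate, since $N_i(b) = N(x_i b) = N^{*}(m_{x_i}(b))$.

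The step I expect to be the main obstacle is the identity $I = (\langle \ker N \rangle : u)$. Write $J := \langle \ker N \rangle \subset I$. The inclusion $(J : u) \subset I$ is immediate from $u$ being a non-zero-divisor modulo $I$; the reverse inclusion requires more care. The plan is to work inside $R/J$: since $B \cap J \subset B \cap I = 0$, the image $\bar B := (B + J)/J$ has dimension $\D$, and the inclusion $x_i B \subset V = B \oplus \ker N \subset B + J$ promotes $\bar B$ to an ideal of $R/J$. The unit $u$ lies in $V \subset B + J$, so $\bar u \in \bar B$; because $\bar B$ is an ideal this forces $\bar u \cdot (R/J) \subset \bar B$, i.e.\ $uR \subset B + J$. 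For $q \in I$ one gets $uq \in (B + J) \cap I$, and the elementary identification $(B + J) \cap I = J$ (if $b + j \in I$ with $j \in J \subset I$, then $b \in B \cap I = 0$) yields $uq \in J$, completing the proof.
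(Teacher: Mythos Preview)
Your proof is correct and follows essentially the paper's strategy: define $m_{x_i}$ on $B$ as ``multiply by $x_i$, then project back to $B$ along $\ker N$'', use $x_ib-m_{x_i}(b)\in\ker N\subset I$ to link $B$ with $R/I$, and deduce the rest. You do streamline two steps. For $B\simeq R/I$, the paper constructs $\phi:R\to B$, $f\mapsto f(\mathbf m)(u)$ (iterated applications of the $m_{x_i}$) and identifies $\ker\phi=I$; you instead observe directly that $\pi(B)\subset R/I$ is an $R$-submodule containing the unit $\pi(u)$, hence all of $R/I$, which is a bit cleaner. For $I\subset(\langle\ker N\rangle:u)$, the paper uses the congruence $\phi(f)\equiv fu\pmod{\langle\ker N\rangle}$ obtained by iterating $m_{x_i}(b)\equiv x_ib\pmod{\ker N}$; your argument that $\bar B=(B+J)/J$ is an ideal of $R/J$ containing $\bar u$, together with $(B+J)\cap I=J$, is the same mechanism phrased without the auxiliary map $\phi$. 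Either presentation works.
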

\begin{proof}
\begin{enumerate}[(i)]
\item Since $N_{|W}: W \rightarrow \C^\D$ is surjective and $W = B \oplus \ker(N_{|W})$, we have $N_{|B} : B \rightarrow \C^\D$ invertible. 
\item It follows from (i) that $V = B \oplus \ker(N)$. Let $\pi: V \rightarrow B$ be the projection onto $B$ along $\ker(N)$ and define 
\begin{alignat*}{2}
	 m_{x_i} : ~ & B \longrightarrow B,  \\
	 &b \longrightarrow \pi(x_i \cdot b) .
\end{alignat*}
Then $\forall b \in B$,
\begin{eqnarray}
m_{x_i}(b) &=& x_i \cdot b \mod \ker(N)\label{eq:modker}\\
&=& x_i \cdot b \mod I \label{modeq}
\end{eqnarray} 
where the last equality follows from $\ker(N) \subset I \cap V$.

For $\alpha \in \NN^n$, we write $\m^\alpha = m_{x_1}^{\alpha_1} \circ \cdots \circ m_{x_n}^{\alpha_n}$ and for $f = \sum_{i=1}^p c_i x^{\alpha_i} \in R$ we define $$f(\m) = \sum_{i=1}^p c_i \m^{\alpha_i} : B \rightarrow B.$$ 
Replacing $u$ by $\pi(u)$ which is also invertible in $R/I$, we can assume that $u\in B$.

We show now that the sequence
\begin{center}
\begin{tikzcd}[column sep = scriptsize, row sep = 0.2 cm]
    0 \arrow{r} & J \arrow{r} & R \arrow{r}{\phi} & B \arrow{r} & 0\\
                    & & f \arrow{r}{}& f(\m)(u) &
\end{tikzcd}
\end{center}
with $ J = \ker(\phi)$ is exact, that is, $\phi(R)=B$ and that $J=I$.
The relation \eqref{modeq} implies that $\forall f\in R, \phi(f)\equiv f \,u  \mod I$ so that $J=\ker \phi \subset I$.
If $\pi_I: R \rightarrow R/I$ is the map that sends $f$ to its residue
class in $R/I$, we have $\pi_I(\phi(f)) = \pi_I(f\, u)$. Hence
$\pi_I(\phi(R)) =\pi_{I}(R\,u)= R/I$ since $u$ is invertible in $R/I$
and $\dim_\C(\phi(R)) \geq \dim_\C(R/I) = \D$. But also $\phi(R)
\subset B$ means $\dim_\C(\phi(R)) \leq \dim_\C(B) = \D$. We deduce
that $\phi$ is surjective and $\pi_I:B \rightarrow R/I$ is an
isomorphism. It follows that the induced map $\overline{\phi}:
R/J\rightarrow  B \simeq R/I$ is an isomorphism of $\C$-vector spaces, which implies $J=I$ since $J\subset I$. We conclude that $\overline{\phi}$ is an isomorphism of $R$-modules between $R/I$ and $B$ and its inverse is $u^{-1} \cdot \pi_{I}$.
This proves the second point. 

\item Moreover, $B \cap I = \{0\}$ since $\pi_{I}: B \rightarrow R/I$
  is an isomorphism; As $B$ is supplementary to $\ker(N)$ in $V$, we
  deduce that $V\cap I = \ker(N)$. It follows that
  $V= B\oplus \ker(N)=B \oplus V\cap I$. We have $\ker (N) \subset I$
  and thus $\ideal{ \ker (N) } \subset I$. To prove the reverse inclusion, notice
  that if $f\in I=J=\ker \phi$ then by the relation
  \eqref{eq:modker}, $f \, u\in \ideal{ \ker (N) } $. This implies that
  $$
  I \subset (\ideal{ \ker (N) }:u) \subset I:u=I 
  $$
  since $u$ is invertible modulo $I$.
  This proves the third point.

\item From Equation \eqref{modeq} and the isomorphism
  $\overline{\phi}$ between $R/I$ and $B$, we deduce that the operators $m_{x_{i}}$
  correspond to the multiplications by the variables $x_{i}$ in the
  quotient algebra $R/I$. Thus they are commuting. By construction, we have
  $N_i(b) = N(x_i \cdot b) = N(\pi(x_i \cdot b)) = (N^* \circ
  m_{x_i})(b)$, where the second equality follows from $\ker(\pi) =
  \ker(N)$. This concludes the proof of the fourth point.

\end{enumerate} 

\end{proof}
It follows from Theorem \ref{thm:affine} that once we have a matrix
representation of $N^*$ and the $N_i, i = 1, \ldots, n$, the matrices
$m_{x_i}$ are given by $(N^*)^{-1} N_i$. The eigenvalues
$z_{ji}, j = 1, \ldots, \D$ can be computed as the generalized
eigenvalues of $N_i v = \lambda N^* v$. As detailed in Section
\ref{sec:tablestosolutions}, computing the eigenvalues and
eigenvectors of the operators of multiplication yields the solution of the polynomial equations.

When $u=1\in V$, then $\forall b \in B, \phi(b)\equiv b \mod I$. Since $B\cap I=\{0\}$, we have $\forall b\in B$, $\phi(b)=b$ and $\phi$ is the normal form or
ideal projector on $B$ along its kernel $I$. Moreover, (iii) implies that $\ideal{ \ker (N) } =I$.

By the normal form characterization proved in
\cite{mourrain1999new, mourrain_generalized_2005}, if the set $B$ is connected to $1$ ($1\in B$ and there exists vector
spaces $B_{l}\subset R$ such that $B_{0}=\Span(1) =\C \subset B_{1}\subset \cdots \subset B_{k}=B$ with
$B_{l+1}\subset B_{l}^{+}$ where $B_{l}^{+}=B_{l}+ x_{1} B_{l}+
\cdots + x_{n}B_{l}$), then the
commutation property (point (iv)) implies that $B\sim R/I$ (point (ii)).

\subsection{Constructing $N$ for dense square systems}
Consider a zero-dimensional ideal $I = \ideal{ f_1, \ldots, f_n }
\subset R$ such that the $f_i$ define a system of polynomial equations
that has no solutions at infinity. That is, denoting $\deg(f_i) =
d_i$, we assume that the system $\{f_1, \ldots, f_{n}\}$ is generic in the sense that there are $\D = \prod_{i=1}^n d_i$ solutions, counting multiplicities, in $\C^n$. We denote these solutions by $\V(I) = \{z_1,\ldots, z_{\D_0} \} \subset \C^n$, where $\D_0 \leq \D$ is the number of distinct solutions. Next, we consider a generic linear polynomial $f_0$.

We use the classical Macaulay resultant matrix construction defined as follows. Let $\rho = \sum_{i=1}^n d_i - n +1$, let $V=R_{\leq \rho}$ be the space of polynomials of degree $\leq \rho$ and $V_{i}=R_{\leq \rho-d_{i}}$. The associated resultant map is
\begin{eqnarray*}
	 M_0 : \quad  V_{0}\times V_1 \times \cdots \times V_n& \longrightarrow& V  \\
	 (q_{0},q_1, \ldots, q_n) &\longmapsto & q_{0}f_{0}+ q_1f_1+ \cdots + q_nf_n.
\end{eqnarray*}
There is a square submatrix $M'$ of the matrix of $M_0$ such that $\det(M')$ is a nontrivial multiple of the resultant $\Res(f_0,f_1,\ldots,f_n)$ \cite{cox2,macaulay1994algebraic}.
In the notation of \cite{telen2017stabilized}, the monomial multiples of $f_{0}$ involved in $M'$ are with exponents in $\Sigma_0 = \{\alpha \in \NN^n: \alpha_i < d_i, i = 1,\ldots,n \}$. The set $\OO_{0}$ of monomials with exponents  in $\Sigma_0$ corresponds generically to a basis (the so-called Macaulay basis) of $R/I$: $B_0 = \Span(\OO_0) \simeq R/I$. 
The matrix $M'$ decomposes as $$M' = \begin{bmatrix}
M_{00} & M_{01} \\ M_{10} & M_{11}
\end{bmatrix}$$
where the rows and columns of the first block $M_{00}$ are indexed by $\OO_{0}$.
The matrix $\tilde{M}=\begin{bmatrix} M_{01}\\ M_{11} \end{bmatrix}$ representing monomial multiples of $f_{1}, \ldots, f_{n}$ is such that $\im(\tilde{M})\subset I\cap V$.
Since for generic systems $f_{1}, \ldots, f_{n}$, the matrix $M_{11}$
is invertible for a generic system $f=(f_{1},\ldots,f_{n})$ (see \cite{macaulay1994algebraic}, \cite[Chapter 3]{cox2}), the rank of $\tilde{M}$ is $\dim V -\D$.
Let $N$ be the coefficient matrix of a basis of the left null-space of $\tilde{M}$ so that $N\, \tilde{M}=0$.
Then $N$ corresponds to a linear map $V\rightarrow \C^{\delta}$ of rank $\delta$ such that its kernel is $\im(\tilde{M})\subset I$.
In fact, denoting $M = (M_0)_{|V_1 \times \cdots \times V_n}$ (i.e. $M(q_1,\ldots,q_n) = q_1f_1 + \ldots + q_nf_n$) it satisfies  
$$
\ker(N) = \im(\tilde{M}) = \im(M) = I \cap V= I_{\leq \rho},
$$
since $B_0 \cap I=\{0\}$ and $M_{11}$ is invertible, so that any element in $\im(M)$ can be projected in $B_0 \cap I$ along $\im(\tilde{M})$ (i.e. $\im(M)\subset \im(\tilde{M}) \subset \im(M)$).

In order to apply Theorem \ref{thm:affine}, we need to restrict $N$ to a subset $W \subset V$, such that $x_i \cdot W \subset V$ and $N_{|W}$ is surjective. Let us take $W= R_{\leq \rho-1}$. Since $M_{11}$ is invertible, $N$ is equivalent to the matrix
$\begin{bmatrix} \id & -M_{01}M_{11}^{-1} \end{bmatrix}$ where the columns of the $\delta \times \delta$ identity block are indexed by the monomials in $\OO_{0}$. Since $B_{0}\subset W$, we deduce that $N_{|W}$ is surjective.

% To this end, we use a very natural description of the null space of the Macaulay matrix. First of all, note that replacing $\tilde{M}$ in the above discussion by the dense Macaulay matrix of degree $\rho$ does not alter the null space. Although this is a bigger matrix, it might be better for the null space computation from a numerical point of view. Define $v^{(d)} = (x^{\alpha_1}, \ldots, x^{\alpha_{m_d}}) \in R^{m_d} $ with $m_d$ the number of columns of the dense Macaulay matrix of degree $d$ and $\alpha_i$ the exponent corresponding to the $i$-th column of this Macaulay matrix. For every $z_i \in \V(I)$, let $\mu_i$ be its multiplicity and $D_i = \{\partial_{i1}, \ldots, \partial_{i\mu_i} \}$ the set of differential operators vanishing at $z_i$ for every $f \in I$ ($\sum_{i=1}^{\D_0} \mu_i = \D$). The kernel of the Macaulay matrix of degree $d \geq \rho - 1$ is spanned by the vectors $\{ \partial_{ij}(v^{(d)})(z_i) \}$, $i = 1,\ldots, \D_0$, $j = 1,\ldots, \mu_i$ and these vectors form a basis of the kernel \cite{bats,dreesen}. Sinds this holds for $d = \rho-1$ as well, it is clear that the restriction of $N$ to the set $W = R_{\leq \rho-1}$ is of full rank. \\

This leads to Algorithm \ref{alg:affine} for computing the algebra structure of $R/I$. Note that in step \ref{basischoice} of the algorithm we make a choice of monomial basis for $R/I$. In order to have accurate multiplication matrices, $N^*$ should be `as invertible as possible'. A good choice here is to use QR with optimal column pivoting on the matrix $N_{|W}$, such that $\OO$ corresponds to a well-conditioned submatrix. This technique is used for the choice of basis on $M$ in \cite{telen2017stabilized}. We use $M$ instead of $\tilde{M}$ for numerical reasons. It leads to a more accurate computation of the null space. 
\begin{algorithm}[ht!]
\caption{Computes the structure of the algebra $R/I$ (affine, dense case)}\label{alg:affine}
\begin{algorithmic}[1]
\Procedure{AlgebraStructure}{$f_1,\ldots,f_n$}
\State $M \gets \textup{the resultant map on $V_1 \times \cdots \times V_n$}$ \label{constmac}
\State $N \gets \nulll(M^{\top})^\top$ \label{nullspace}
\State $N_{|W} \gets \textup{columns of $N$ corresponding to monomials of degree $< \rho$}$ 
\State $N^* \gets \textup{columns of $N_{|W}$ corresponding to an invertible submatrix}$ \label{basischoice}
\State $\OO \gets \textup{monomials corresponding to the columns of $N^*$}$ 
\For{$i=1,\ldots,n$}
\State $N_i \gets \textup{columns of $N$ corresponding to $x_i \cdot \OO$}$
\State $m_{x_i} \gets (N^*)^{-1}N_i$
\EndFor
\State \textbf{return} $m_{x_1},\ldots, m_{x_n}$
\EndProcedure
\end{algorithmic}
\end{algorithm}

\begin{example} \label{ex:affine}
Consider the ideal $I = \ideal{ f_1,f_2 } \subset \C[x_1,x_2]$ given by 
\begin{eqnarray*}
f_1 &=& 7 + 3x_1 -6 x_2 -4x_1^2 +2 x_1x_2 + 5 x_2^2,\\
f_2 &=& -1 - 3x_1 +14 x_2 -2 x_1^2 +2x_1x_2-3x_2^2.
\end{eqnarray*}
As illustrated in Figure \ref{fig:ex2}, the solutions are $z_1 = (-2,3), z_2 = (3,2), z_3 =(2,1), z_4 =(-1,0)$. The dense Macaulay matrix $M$ of degree $\rho = d_1+d_2 - n+1 = 3$ is 
\[
  M^\top=\kbordermatrix{%
      & 1  & x_1 & x_2 & x_1^2 & x_1x_2 & x_2^2 &x_1^3 & x_1^2x_2 & x_1x_2^2& x_2^3  \\
    f_1 & 7  & 3 & -6& -4 & 2 & 5&&&& \\
    x_1f_1 & &7 & &3&-6& & -4&2&5\\
    x_2f_1 & & & 7 & & 3 & -6& & -4 &2&5\\
    f_2 & -1  & -3 & 14 & -2&2&-3 \\
    x_1f_2 & &-1 & &-3&14& & -2&2&-3\\
    x_2f_2 & & & -1 & & -3 & 14& & -2 &2&-3\\
  }.
\]
Since all solutions are simple, a basis for the left null space of $M$ is given by $v^{(3)}(z_i), i = 1,\ldots,4$, where
$$ v^{(3)}(x_1,x_2) = \begin{bmatrix}
1  & x_1 & x_2 & x_1^2 & x_1x_2 & x_2^2 &x_1^3 & x_1^2x_2 & x_1x_2^2& x_2^3
\end{bmatrix}.$$
We find 
\[
  N=\kbordermatrix{%
     &1  & x_1 & x_2 & x_1^2 & x_1x_2 & x_2^2 &x_1^3 & x_1^2x_2 & x_1x_2^2& x_2^3  \\
     v^{(3)}(-2,3)& 1 & -2 & 3 &4&-6&9&-8&12&-18&27\\
     v^{(3)}(3,2)&1&3&2&9&6&4&27&18&12&8\\
     v^{(3)}(2,1)&1&2&1&4&2&1&8&4&2&1\\
     v^{(3)}(-1,0)&1&-1&0&1&0&0&-1&0&0&0\\
  }.
\]
For $\OO = \{x_1,x_2,x_1^2,x_1x_2 \}$, the submatrices we need are
$$ N^* = \begin{bmatrix}
-2&3&4&-6\\3&2&9&6\\2&1&4&2 \\-1&0&1&0
\end{bmatrix}, ~ N_1 = \begin{bmatrix}
4&-6&-8&12\\9&6&27&18\\4&2&8&4\\1&0&-1&0
\end{bmatrix}, ~ N_2 = \begin{bmatrix}
-6&9&12&-18\\6&4&18&12\\2&1&4&2\\0&0&0&0
\end{bmatrix},$$
corresponding to $\OO, x_1\cdot \OO$ and $x_2 \cdot \OO$
respectively. The vector space $B$ in this example is the space of
polynomials supported in $\OO$. One can check that $N^*$ is
invertible. Using Matlab, we find the eigenvalues of $N_2 v = \lambda
N^* v$ via the command $\texttt{eig}$. The eigenvalues are $0,1,2,3$
as expected. Of course, in practice we do not know the solutions and
we cannot construct the nullspace in this way. Any basis will do,
since using another basis comes down to left multiplying $N$ and the
$N_i$ by an invertible matrix. Note that $\OO$ does not correspond to
any monomial order and it is not connected to one, so it does not
correspond to a Groebner or a border basis. 
\begin{figure} 
\centering
\input{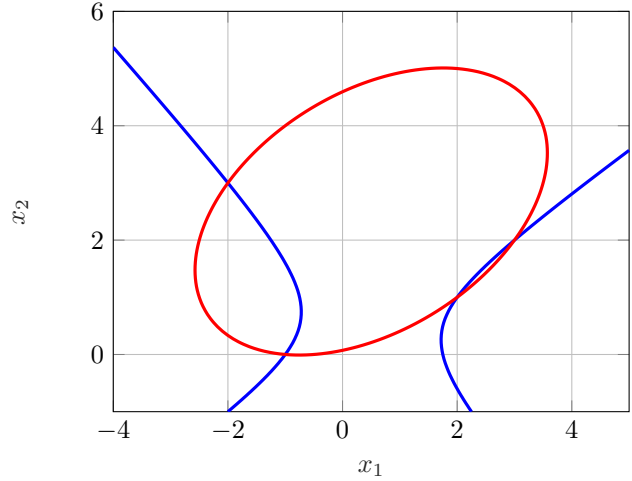}
\caption{Picture in $\R^2$ of the algebraic curves $\V(f_1)$ (\ref{bluecurve}) and $\V(f_2)$ (\ref{redcurve}) from Example \ref{ex:affine}.}
\label{fig:ex2}
\end{figure}
\end{example}

\section{Ideals defining points in $(\C^*)^n$} \label{sec:toric}
We now switch to another setting, in which we want to find the roots in the algebraic torus $(\C^*)^n$ of a set of Laurent polynomials. Denote by $$R_{x_{1}\cdots x_{n}} = \C[x_1,x_1^{-1}\ldots, x_n, x_n^{-1}] = \C[\x, \x^{-1}]$$ the localization of $R$ at $x_{1}\cdots x_{n}.$ We consider a zero-dimensional ideal $$I = \ideal{ f_1, \ldots, f_s } \subset R_{x_{1}\cdots x_{n}}$$ generated by $s$ Laurent polynomials in $n$ variables. Its localization is denoted $I^{*}= I \cdot R_{x_{1}\cdots x_{n}} \cap R$.
Hereafter, we assume that $I^{*}$ defines $\D$ solutions, counting multiplicities. These are the solutions of $I$ which are in $(\C^*)^n$.

Let $V$ be a vector space of polynomials in $R$ supported in some finite subset $\sub$ of $\NN^n$:
$$ V = \bigoplus_{\alpha \in \sub} \C \cdot x^\alpha \subset R.$$
We consider here also a map $N :  V \longrightarrow \C^\D$.

\begin{theorem} \label{prop:torus}
Assume that $\dim_{\C}R/I^{*}=\D>0$ and that $\ker(N) \subset I \cap V$.
If there is a vector subspace $W \subset V$ such that $x_i \cdot W \subset V, i = 1, \ldots, n$ and for the restriction of $N$ to $W$ we have $\rank(N_{|W}) = \D$, then for any vector subspace $B \subset W$ such that $W = B \oplus \ker(N_{|W})$, we have: 
\begin{enumerate}[(i)]
\item $N^* = N_{|B}$ is invertible, 
\item $V= B \oplus V\cap I^{*}$ and $(\ideal{ \ker (N) }:u)=I^{*}$ for any monomial
  $u\in V$.  
\item there is an isomorphism of $R$-modules $B \simeq R/I^{*}$,  
\item the maps $N_i$ given by  
\begin{alignat*}{2}
	 N_i : ~ & B \longrightarrow \C^\D,  \\  
	 &b \longrightarrow N(x_i\cdot b)  
\end{alignat*} 
for $i = 1,\ldots,n$ can be decomposed as $N_i = N_0 \circ m_{x_i}$
where $m_{x_i}: B \rightarrow B$ define the multiplications by $x_i$
in $B$ modulo $I^*$ and are commuting.
\end{enumerate}
\end{theorem}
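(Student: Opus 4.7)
The plan is to adapt the proof of Theorem \ref{thm:affine} to the toric setting, with $I^*$ playing the role of $I$. The crucial new observation is that in this setting every monomial in $V$ is automatically invertible in $R/I^*$, so the hypothesis that ``$V$ contains a unit'' becomes vacuous, and we may in fact take $u$ to be an arbitrary monomial of $V$ in statement (ii).

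First I would verify this key invertibility claim. Since $\V(I^*)\subset(\C^*)^n$, every variable $x_i$ takes a nonzero value at each solution and is therefore not a zero-divisor in the Artinian ring $R/I^*$; hence $x_i$ is a unit modulo $I^*$, and consequently so is every monomial $x^\alpha$. In particular, $(I^*:u) = I^*$ for every monomial $u\in V$. This is the only genuinely new ingredient; once it is in place, the affine proof transports with only cosmetic modifications.

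Point (i) is obtained immediately as in the affine case, from $N_{|W}$ being surjective together with $W = B\oplus\ker(N_{|W})$. For (iii), I would fix a monomial $u\in V$ and replace it by its projection $\pi(u)\in B$ along $\ker(N)$ (still a unit modulo $I^*$ because $\pi(u)\equiv u \mod \ker(N)$ and $\ker(N)\subset I^*$), then define the $R$-module map $\phi: R \to B$ by $\phi(f) = f(\m)(\pi(u))$ with $m_{x_i}(b) = \pi(x_i\cdot b)$ and $\m^\alpha = m_{x_1}^{\alpha_1}\circ\cdots\circ m_{x_n}^{\alpha_n}$. From $m_{x_i}(b)\equiv x_i b \mod I^*$, we get $\phi(f)\equiv f\cdot\pi(u) \mod I^*$, hence $\ker\phi\subset I^*$. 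Since $\pi(u)$ is a unit in $R/I^*$ and $\dim_\C B = \dim_\C R/I^* = \D$, the standard dimension argument forces $\phi$ to be surjective and the induced map $R/\ker\phi\to B$ to be a $\C$-linear isomorphism, so $\ker\phi = I^*$ and $B\simeq R/I^*$ as $R$-modules.

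For (ii), once $B\cap I^* = 0$ follows from the isomorphism, we obtain $V = B\oplus(V\cap I^*)$ and $V\cap I^* = \ker(N)$. To prove $(\ideal{\ker(N)}:u) = I^*$ for every monomial $u\in V$, I would rerun the same construction with that particular $u$ playing the role above: this yields $\ker\phi_u = I^*$, and combined with $\phi_u(f)\equiv f\cdot\pi(u)\mod\ideal{\ker(N)}$ it gives $I^*\subset(\ideal{\ker(N)}:u)$. The reverse inclusion $(\ideal{\ker(N)}:u)\subset(I^*:u) = I^*$ follows from $\ideal{\ker(N)}\subset I^*$ together with the invertibility of $u$ modulo $I^*$. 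Finally, (iv) comes out exactly as before: under $B\simeq R/I^*$ the maps $m_{x_i}$ correspond to multiplications by $x_i$, so they commute, and $N_i = N^*\circ m_{x_i}$ via $\ker(\pi) = \ker(N)$. I do not expect any further difficulty beyond the initial invertibility observation.
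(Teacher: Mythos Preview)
Your proposal is correct and follows essentially the same approach as the paper: the paper's proof is a one-line application of Theorem~\ref{thm:affine} with $I^{*}$ in place of $I$, noting that any monomial of $V$ is invertible in $R/I^{*}$ and can therefore serve as $u$. You have simply unpacked that citation by rewriting the relevant steps of the affine argument in the toric context, which is fine but more than is needed once Theorem~\ref{thm:affine} is available.
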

\begin{proof} We apply Theorem \ref{thm:affine} with $I^{*}\supset I$ and $u$ any monomial of $V$ (since any monomial is invertible in $R/I^{*}$).
\end{proof}
Again, the eigenvalues $z_{ji}, j = 1, \ldots, \D$ of the $m_{x_i}$ can be computed as the generalized eigenvalues of $N_i v = \lambda N^* v$, once we have a matrix representation of $N^*$ and the $N_i, i=1, \ldots, n$.

\subsection{Constructing $N$ for square systems}

For the construction of $N$ in the toric case we rely on the famous BKK-theorem by Bernstein \cite{bernstein}, Kushnirenko \cite{kush} and Khovanskii \cite{kho} that bounds the number of solutions in the algebraic torus for a sparse, square system. To state it, we need a few definitions. More details can be found in \cite{cox2,fulton1993introduction,sturm}.
\begin{definition}[Minkowski sum]
Let $P$ and $Q$ be polytopes in $\R^n$. The \textup{Minkowski sum} of $P$ and $Q$ is 
$$P+Q = \{p+q : p \in P, q \in Q\}.$$
\end{definition}

\begin{definition}[Mixed volume]
The $n$-dimensional \textup{mixed volume} of a collection of $n$ polytopes $P_1,\ldots,P_n$ in $\R^n$, denoted $\MV(P_1,\ldots,P_n)$, is the coefficient of the monomial $\lambda_1 \lambda_2 \cdots \lambda_n$ in $\Vol_n(\sum_{i = 1}^n \lambda_i P_i)$.
\end{definition}
 
\begin{theorem}[Bernstein's Theorem] \label{ber}
Let $\I = \ideal{ f_1,\ldots,f_n } \subset R_{x_1\cdots x_n}$ define a zero-dimensional ideal and let $P_i$ be the Newton polytope of $f_i$. The number of points in $\V(\I) \cap (\C^*)^n$ is bounded above by $\MV(P_1,\ldots,P_n)$. Moreover, for generic choices of the coefficients of the $f_i$, the number of roots in $(\C^*)^n$, counting multiplicities, is exactly equal to $\MV(P_1,\ldots,P_n)$.
\end{theorem}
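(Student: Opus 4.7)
The plan is to compactify the algebraic torus $(\C^*)^n$ by a suitable complete toric variety and recast the root count as an intersection number on that compactification. First, I would choose a smooth projective toric variety $X_\Sigma$ whose defining fan $\Sigma$ refines the normal fan of the Minkowski sum $P_1+\cdots+P_n$. For each $i$, the polytope $P_i$ then determines a globally generated line bundle $L_i$ on $X_\Sigma$ (associated to a torus-invariant nef Cartier divisor) whose global sections have a $\C$-basis indexed by the lattice points of $P_i$. The Laurent polynomial $f_i$ lifts to a section $s_i\in H^0(X_\Sigma,L_i)$ whose zero scheme agrees with $\V(f_i)$ on the open torus $(\C^*)^n\subset X_\Sigma$.

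Next, I would invoke toric intersection theory. Since each $L_i$ is nef, the number of common zeros of $s_1,\ldots,s_n$ on $X_\Sigma$, counted with multiplicity when the intersection is zero-dimensional, is bounded above by the top intersection number $L_1\cdots L_n$ in the Chow ring of $X_\Sigma$. The standard identification of line bundles with lattice polytopes on a toric variety \cite{fulton1993introduction} gives
\[
L_1\cdots L_n \;=\; \MV(P_1,\ldots,P_n),
\]
which can be verified by polarizing the Kushnirenko identity $L^n=n!\,\Vol_n(P)$, valid when all polytopes coincide.

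The upper bound in the theorem then follows because the common zeros in $(\C^*)^n$ form a subset of the total zero scheme on $X_\Sigma$; zeros on the toric boundary $X_\Sigma\setminus(\C^*)^n$, or positive-dimensional excess components, only decrease the open-torus count. The equality statement for generic coefficients is the main obstacle: I would show that outside a proper Zariski-closed subset of coefficient space the sections $s_i$ (a) have no common zero on any proper torus orbit of $X_\Sigma$, (b) meet transversely in $(\C^*)^n$, and (c) have only finitely many common zeros. The key technical point is (a): a common zero lying on the orbit of a cone $\sigma\in\Sigma$ is equivalent, via the standard description of torus-invariant orbit closures, to a common zero of the initial face systems $\mathrm{in}_\sigma(f_1),\ldots,\mathrm{in}_\sigma(f_n)$ in a lower-dimensional torus, which is a proper algebraic condition on the coefficients by induction on $n$. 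Assembling this genericity analysis over all cones of $\Sigma$ and combining it with transversality (which follows from a generic Bertini-type argument on $X_\Sigma$) is the delicate combinatorial step that closes the proof.
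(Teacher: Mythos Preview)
Your sketch is essentially correct and follows the standard toric-geometry route to Bernstein's theorem: compactify $(\C^*)^n$ by a smooth projective toric variety compatible with the Newton polytopes, identify each $f_i$ with a section of a nef line bundle $L_i$, use the polarized Kushnirenko identity $L_1\cdots L_n=\MV(P_1,\ldots,P_n)$, and argue that generically no common zeros escape to the boundary orbits by analyzing the face systems. This is the argument one finds, for instance, in Fulton's \cite{fulton1993introduction}.

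However, the paper does not actually prove this theorem. Its ``proof'' consists solely of pointers to the literature: sketches in \cite{cox2,sturm}, Bernstein's original article \cite{bernstein}, and the homotopy-continuation proof of Huber and Sturmfels \cite{hustu}. So there is nothing to compare your argument against in the paper itself. Your toric intersection-theory approach is one of the standard proofs and is closest in spirit to the treatment in \cite{cox2} and Khovanskii's work; it differs from Bernstein's original deformation argument and from the polyhedral homotopy proof in \cite{hustu}, which tracks solution paths as one deforms from a binomial start system and counts the surviving paths via mixed cells of a mixed subdivision. Each route has its merits: yours gives a clean conceptual reason for the mixed-volume formula via intersection numbers, while the homotopy proof is constructive and underlies the numerical solvers (PHCpack) that the paper benchmarks against.
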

\begin{proof}
For sketches of the proof we refer to \cite{cox2,sturm}. For details, the reader may consult Bernstein's original paper \cite{bernstein}. A proof based on homotopy continuation is given in \cite{hustu}.
\end{proof}
The type of genericity we assume in this section is that the number of solutions of $\I$ in $(\C^*)^n$, counting multiplicities, is exactly $\MV(P_1,\ldots,P_n)$. Let $f_0$ be a generic linear polynomial and let $v \in \R^n$ be a generic, small $n$-tuple. We consider the resultant map
\begin{eqnarray*}
	 M_0 : \quad  V_{0}\times V_1 \times \cdots \times V_n& \longrightarrow& V  \\
	 (q_{0},q_1, \ldots, q_n) &\longmapsto & q_{0}f_{0}+ q_1f_1+ \cdots + q_nf_n.
\end{eqnarray*}
where $V_i = \bigoplus_{\alpha \in \sub_i} \C \cdot x^\alpha$, $\sub_i = (P_0 + \ldots + \hat{P}_i + \ldots + P_n + v) \cap \Z^n$ (the notation $\hat{P}_i$ means that this term is left out) and $V = \bigoplus_{\alpha \in \sub} \C \cdot x^\alpha$, $\sub = (\sum_{i=0}^n P_i + v) \cap \Z^n$. We can select a square submatrix $M'$ of this map, so that $\det(M')$ is a nontrivial multiple of the toric resultant of $f_0,f_1,\ldots,f_n$ \cite{emir1,cox2}. We set $W = \{ f \in V : x_i \cdot f \in V, i = 1,\ldots,n \}$. As for the Macaulay resultant matrix, we write
$$ M' = \begin{bmatrix}
M_{00} & M_{01}\\ M_{10} & M_{11}
\end{bmatrix}
$$
where the rows and columns of $M_{00}$ are indexed by a set $\OO_0 \subset W$ of monomials which is a basis of $R/I$ and $M_{11}$ is invertible. Denoting as in Section \ref{sec:affine} by $\tilde{M}$ the right block column of $M'$ and by $N$ its left null space, we have again that $\ker(N) = \im(\tilde{M}) = \im(M) = I\cap V$ with $M = (M_0)_{|V_1 \times \cdots \times V_n}$. Since $\OO_0 \subset W$, $N_{|W}$ is surjective and we can apply Theorem \ref{prop:torus}. Algorithm \ref{alg:toric} only differs from Algorithm \ref{alg:affine} by the construction of $M$.

\begin{algorithm}[ht!]
\caption{Computes the algebra structure of $R/I^*$ (generic, sparse case)}\label{alg:toric}
\begin{algorithmic}[1]
\Procedure{AlgebraStructure}{$f_1,\ldots,f_n$}
\State $M \gets \textup{the toric resultant map on $V_1 \times \cdots \times V_n$}$ 
\State $N \gets \nulll(M^{\top})^\top$ \label{nullspace-sparse}
\State $N_{|W} \gets \textup{columns of $N$ corresponding to monomials $x^\alpha$ such that $x^\alpha \in W$}$ 
\State Apply Algorithm \ref{alg:affine} from step 5 onward.
\EndProcedure
\end{algorithmic}
\end{algorithm}

\section{Ideals defining points in $\PP^n$} \label{sec:proj}

Suppose we are interested in finding all projective roots of a system of homogeneous equations. Denote $\Rh = \C[x_0,x_1,\ldots,x_n]$ and let $\I = \ideal{ f_1, \ldots, f_s } \subset \Rh$ be a zero-dimensional ideal generated by $s$ homogeneous polynomials in $n+1$ variables with $\D < \infty$ solutions in $\PP^n$, counting multiplicities.
For $d\in \NN$, let $V = \Rh_d$ be the degree $d$ part of $\Rh$ and suppose we have a map $N : V \rightarrow \C^\D$ such that $\ker(N) \subset \I_d = \I\cap V$.
We also assume that there exists $h\in S_{1}$ such that the map
\begin{alignat*}{2}
	 {N_{h}} : ~ & \Rh_{d-1} \longrightarrow \C^\D,  \\
	 &f \longrightarrow N(h \cdot f) 
\end{alignat*}
is surjective.
Let \begin{alignat*}{2}
	 {N_{i}} : ~ & \Rh_{d-1} \longrightarrow \C^\D,  \\
	 &f \longrightarrow N(x_{i} \cdot f).
\end{alignat*}
Then $N_{h}=\sum_{i=1}^{n} h_{i} N_{i}$ where $h=h_{0}x_{0}+ \cdots+ h_{n} x_{n}$. Without loss of generality, we assume $h_0 \neq 0$.
%We denote by $d_{h}: h\cdot S_{d-1}\rightarrow S_{d-1}$ the division by $h$ such that $d_{h}(h\cdot f)= f$ for $f\in S_{d-1}$.

Let $R = \C[y_1,\ldots,y_n]$ be the ring of polynomials in $n$ variables. We have homogenization isomorphisms 
\begin{alignat*}{2}
	 \sigma_d : ~ & R_{\leq d} \longrightarrow \Rh_d,  \\
	 &f \longrightarrow h^d f \left ( \frac{x_1}{h},\ldots, \frac{x_n}{h} \right)
\end{alignat*}
for every $d \in \NN$. The inverse dehomogenization map in degree $d$ is given by 
\begin{alignat*}{2}
	 \sigma_{d}^{-1} : ~ & \Rh_{d} \longrightarrow R_{\leq d},  \\
	 &f \longrightarrow f \left (\frac{1-\sum_{i=1}^n h_i y_i}{h_0},y_1,\ldots, y_n \right).
       \end{alignat*}
Its definition is independent of the degree $d$, so that we can omit $d$ and denote it $\sigma^{-1}$.
The ideal $\tilde{I} =  \ideal{ \sigma^{-1}(f_1), \ldots, \sigma^{-1}(f_n) }$ has $\delta$ solutions in $\C^n$, counting multiplicities. Let $\tilde{V} = R_{\leq d}$ and $\tilde{W} = R_{\leq d - 1}$. The map $\tilde{N}: \tilde{V} \rightarrow \C^\D$ given by $\tilde{N} = N \circ \sigma_d$ is surjective and $\ker(\tilde{N}) \subset \tilde{I} \cap \tilde{V}$. Also, $y_i \cdot \tilde{W} \subset \tilde{V}$, $i = 1,\ldots,n$. For $f \in R_{\leq d - 1}$, $\sigma_d(f) = h \cdot \sigma_{d-1}(f)$. Therefore $\tilde{N}(R_{\leq d-1}) = N(h \cdot \Rh_{d-1})$ and $\tilde{N}_{|\tilde{W}} = {N_{h}} \circ \sigma_{d-1}$ is surjective. 

\begin{theorem} \label{prop:projective}
Let $B \subset \Rh_{d-1}$ be any subspace such that $\Rh_{d-1} = B \oplus \ker({N_{h}})$. Under the above assumptions,
\begin{enumerate}[(i)]
\item $N^{*} = {(N_{h})}_{|B}$ is invertible,
\item there is an isomorphism of $\C\left [ \frac{x_0}{h}, \cdots, \frac{x_n}{h} \right ]$-modules $h \cdot B \simeq S_{d}/\I_{d} $,
\item $\Rh_{k} = h^{k-d+1}\cdot B \oplus \I_{k}$ for $k\geq d$ and
  $I=(\ideal{ \ker (N) }:h^{*})$. 
\item the maps $N_i$ given by \begin{alignat*}{2}
	 N_i: ~ & B \longrightarrow \C^\D,  \\ 
	 & b \longrightarrow N(x_i\cdot b) 
\end{alignat*}
for $i = 0,\ldots,n$ can be decomposed as $N_i = N^{*} \circ m_{x_i}$, 
where $m_{x_i}$ represent the multiplications by $x_i/h$ in $h\cdot B$
modulo $\I_{d}$ and are commuting. 
%\item $\ker(\tilde{N}^{*}) = \I_{d-1}$,  
\end{enumerate}
\end{theorem}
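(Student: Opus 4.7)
The strategy is to apply Theorem \ref{thm:affine} to the dehomogenized data $(\tilde{N}, \tilde{V}, \tilde{W}, \tilde{I})$ with invertible element $u=1$, and transport every conclusion back to the projective setting via the homogenization isomorphisms $\sigma_{k}$. The hypotheses of Theorem \ref{thm:affine} are already verified in the paragraph preceding the statement: $\ker(\tilde{N}) \subset \tilde{I} \cap \tilde{V}$, $y_{i}\cdot \tilde{W} \subset \tilde{V}$ for $i=1,\ldots,n$, and $\tilde{N}_{|\tilde{W}}$ is surjective. Setting $\tilde{B} := \sigma_{d-1}^{-1}(B) \subset \tilde{W}$, the decomposition $\Rh_{d-1}=B \oplus \ker(N_{h})$ pulls back under $\sigma_{d-1}$ to $\tilde{W} = \tilde{B} \oplus \ker(\tilde{N}_{|\tilde{W}})$. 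Theorem \ref{thm:affine} then yields the affine analogues of all four conclusions: $\tilde{N}_{|\tilde{B}}$ is invertible, $\tilde{B} \simeq R/\tilde{I}$ as $R$-modules, $R_{\leq d} = \tilde{B} \oplus (R_{\leq d} \cap \tilde{I})$, $\tilde{I} = \ideal{\ker \tilde{N}}$, and there exist commuting operators $m_{y_{i}}$ on $\tilde{B}$ with $\tilde{N}_{i} = \tilde{N}_{|\tilde{B}} \circ m_{y_{i}}$.

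Two elementary identities carry out the transport of (i) and (iv). For $\tilde{b} \in R_{\leq d-1}$, $\sigma_{d}(\tilde{b}) = h \cdot \sigma_{d-1}(\tilde{b})$ shows that $\sigma_{d}$ restricts to a vector space isomorphism $\tilde{B} \to h\cdot B$, and one obtains $\tilde{N}_{|\tilde{B}} = (N_{h})_{|B} \circ (\sigma_{d-1})_{|\tilde{B}}$; invertibility of the left side yields (i). For $i \geq 1$, a direct computation gives $\sigma_{d}(y_{i}\tilde{b}) = x_{i}\,\sigma_{d-1}(\tilde{b})$, so $\tilde{N}_{i} = N_{i} \circ (\sigma_{d-1})_{|\tilde{B}}$. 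Defining $m_{x_{i}} := \sigma_{d-1} \circ m_{y_{i}} \circ \sigma_{d-1}^{-1}$ on $B$ produces commuting operators satisfying $N_{i} = N^{*} \circ m_{x_{i}}$. The case $i=0$ is handled via $\sigma^{-1}(x_{0}) = h_{0}^{-1}\bigl(1 - \sum_{j \geq 1} h_{j} y_{j}\bigr)$: the resulting $m_{x_{0}}$ is multiplication by this polynomial on $\tilde{B}$. The ring isomorphism $\C[x_{0}/h, \ldots, x_{n}/h] \to R$ sending $x_{i}/h \mapsto \sigma^{-1}(x_{i})$, well-defined since $\sigma^{-1}(h)=1$, recasts each $m_{x_{i}}$ as multiplication by $x_{i}/h$, proving (iv).

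For (ii) and (iii) the remaining work is to identify $\sigma_{k}(R_{\leq k} \cap \tilde{I})$ with $\I_{k}$. The inclusion $\I_{k} \subset \sigma_{k}(R_{\leq k} \cap \tilde{I})$ is immediate from $\sigma^{-1}(\I) \subset \tilde{I}$; the reverse inclusion is equivalent to the saturation identity $(\I : h^{\infty})_{k} = \I_{k}$. This is where the surjectivity of $N_{h}$ is decisive: if $h(z)=0$ for some $z \in \V(\I) \subset \PP^{n}$, the evaluation functional $\mathrm{ev}_{z}$ at $z$ on $\Rh_{d}$ vanishes on $\I_{d} \supset \ker N$ and hence factors as $\lambda \circ N$ for some nonzero $\lambda \in (\C^{\D})^{*}$; but then $\lambda \circ N_{h} = h(z)\,\mathrm{ev}_{z} = 0$, contradicting the surjectivity of $N_{h}$. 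Hence $h$ is a non-zero-divisor modulo $\I$ and $(\I : h^{\infty}) = \I$. The affine decomposition $R_{\leq k} = \tilde{B} \oplus (R_{\leq k} \cap \tilde{I})$, valid for every $k \geq d-1$ since $\tilde{B} \cap \tilde{I} = 0$, combined with $\sigma_{k}(\tilde{B}) = h^{k-d+1}\cdot B$, then yields $\Rh_{k} = h^{k-d+1}\cdot B \oplus \I_{k}$ in (iii). The saturation equality $\I = (\ideal{\ker N} : h^{*})$ follows from $\tilde{I} = \ideal{\ker \tilde{N}}$ by homogenization: any $g \in \I$ dehomogenizes into $\ideal{\ker \tilde{N}}$, and rehomogenizing yields $h^{m}g \in \ideal{\ker N}$ for some $m$. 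Finally, (ii) is the $k=d$ case of the decomposition together with the transported module structure through $(\sigma_{d})_{|\tilde{B}}$ and the ring isomorphism $\C[x_{0}/h, \ldots, x_{n}/h] \simeq R$. The main obstacle I anticipate is the saturation step $(\I : h^{\infty}) = \I$, which bridges $\tilde{I}$ (naturally the saturation of $\I$ at $h$) with $\I$; once dispatched by the $N_{h}$-surjectivity argument, the rest is careful bookkeeping through the $\sigma_{k}$.
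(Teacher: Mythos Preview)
Your overall strategy---dehomogenize, apply Theorem~\ref{thm:affine} with $u=1$, and transport the conclusions back via the isomorphisms $\sigma_k$---is exactly the paper's approach, and your handling of (i), (ii), and (iv) is correct and essentially matches the paper.

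Where you diverge, and where you are in fact more careful than the paper, is the identification $\sigma_k(R_{\le k}\cap\tilde I)=I_k$ needed for (iii). You are right that this is the crux and that the paper passes over it. However, your resolution has a gap: the evaluation-functional argument only shows that $h$ avoids every \emph{point} of $\V(I)$, i.e.\ every minimal prime of $I$. It does not rule out that the irrelevant ideal $(x_0,\ldots,x_n)$ is an embedded prime of $I$; in that case $h$, being a linear form, lies in that associated prime and $(I:h^\infty)\supsetneq I$. So the jump from ``$h(z)\neq 0$ for all $z\in\V(I)$'' to ``$h$ is a non-zero-divisor modulo $I$'' and hence ``$(I:h^\infty)=I$'' is unjustified.

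What is actually needed, and what can be proved directly, is only the degree-wise equality $(I:h^\infty)_k=I_k$ for $k\ge d$. For $k=d$ a dimension count suffices: surjectivity of $N$ gives $\dim\ker N=\dim S_d-\delta$, while $hB\cap I_d=0$ (which you already have) forces $\dim S_d/I_d\ge\delta$; combined with $\ker N\subset I_d$ this yields $\ker N=I_d$ and $S_d=hB\oplus I_d$. For $k>d$ induct: from $S_{k-1}=h^{k-d}B\oplus I_{k-1}$, multiply by all $x_i$ and use $x_iB\subset S_d=hB\oplus I_d$ to get $S_k=h^{k-d+1}B+I_k$, the sum being direct since $h^{k-d+1}B\cap I_k=0$. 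This replaces your saturation argument entirely. (Incidentally, the second clause of (iii), $I=(\langle\ker N\rangle:h^*)$, is as written only correct up to saturation of $I$ by the irrelevant ideal; your difficulty in establishing it reflects an imprecision in the statement rather than a defect in your method.)
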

\begin{proof}
  We apply Theorem \ref{thm:affine} to $\tilde{N}$ and $\tilde{W}=R_{\leq d-{1}}\ni 1$.
  Let $B$ be a supplementary space of  $\ker({N_{h}})$ in $\Rh_{d-1}$. Then
  $\tilde{B}=\sigma^{-1}(B)$ is a supplementary  vector space of $\ker(\tilde{N}_{|\tilde{W}})$ in $\tilde{W}$.
\begin{enumerate}[(i)]
\item $(N_{h})_{|B} = \tilde{N}_{|\tilde{B}} \circ \sigma^{-1}$ is
  invertible  since $\tilde{N}_{|\tilde{B}}$ is invertible. Note that
  $\sigma^{-1}(h \cdot b) = \sigma^{-1}(b) \in R_{\leq d-1}$ since
  $\sigma^{-1}(h) = 1$.
\item Theorem \ref{thm:affine} gives $\tilde{B} \simeq R/\tilde{I}$. Applying $\sigma_{d}$ gives $h \cdot B \simeq \Rh_{d}/\I_{d}$. 
\item Theorem \ref{thm:affine} implies that
  $\tilde{V} = \tilde{B} \oplus \tilde{V} \cap \tilde{\I}$. More
  generally, we have
  $\tilde{R}_{\le k} = \tilde{B} \oplus \tilde{R}_{\le k} \cap
  \tilde{\I}$ for $k\ge d$. By applying $\sigma_{k}$ for $k\geq d$, we
  have $S_{k} = h^{k-d+1} B \oplus I_{k}$.  From \ref{thm:affine}, we also
  have $\tilde{I}=\ideal{ \ker (\tilde{N}) }$. By applying $\sigma_{k}$ for
  $k\in \mathbb{N}$, we deduce that $I=(\ideal{ \ker (N) }:h^{*})$, which proves the third
  point.

\item  Let $m_{y_i}$ be the maps from Theorem \ref{thm:affine}. Consider the induced maps
  $$
  m_{x_i} = \sigma_{d} \circ m_{y_i} \circ \sigma^{-1}, i=1,\ldots,n
  $$ and
  $$
  m_{x_0} =  \sigma_{d} \circ \left ( \frac{1 - \sum_{i=1}^n h_i y_i}{h_0} \right) (\m) \circ \sigma^{-1},
  $$
By definition, for $i = 1,\ldots, n$ and $b\in B$, we have $N(x_i \cdot b) = \tilde{N}(\sigma^{-1}(x_i \cdot b)) = \tilde{N}(y_i \cdot  \sigma^{-1}(b))$. By Theorem \ref{thm:affine} this can be written as $ \tilde{N}(y_i \cdot  \sigma^{-1}(b)) = (\tilde{N}_{|\tilde{B}} \circ m_{y_i}) (\sigma^{-1}(b)) = (N^{*} \circ \sigma_{d} \circ m_{y_i}) (\sigma^{-1}(b))$. And since $\sigma_{d} \circ m_{y_i} = m_{x_i} \circ \sigma_{d}$ we get $N_i(b) = (N^{*} \circ m_{x_i})(h \cdot b)$. Analogously, using linearity, for $N_0$ we have 
$$
N(x_0 \cdot b) = \tilde{N} \left ( \frac{1-\sum_{i=1}^n h_iy_i}{h_0} \cdot \sigma^{-1}(b) \right ) = (N^{*} \circ m_{x_0}) (h \cdot b).
$$
We now show that $m_{x_i}$ represents the multiplication by $x_i/h$ in $h\cdot B\subset h\cdot \Rh_{d-1}$ modulo $\I_{d}$.
For $b \in B$, let $\sigma^{-1}(h\cdot b) = \sigma^{-1}(b)  = \tilde{b}\in \tilde{B}$ and $m_{y_i}(\tilde{b}) = y_i \cdot \tilde{b} - p$ with $p \in \tilde{I}$. Then for $i = 1,\ldots,n$,
$$
m_{x_i}(h\cdot b) = \sigma_{d}(y_i \cdot \tilde{b}-p) = {x_i} \cdot \sigma_{d-1}(\tilde{b}) - \sigma_d(p) = {x_i} \cdot b \mod \I_{d}.
$$
For $m_{x_0}$, the result follows from $\sigma_{1} \left ( \frac{1-\sum_{i=1}^n h_i y_i}{h_0} \right ) = x_0$.
%\item Similarly, $\ker(\tilde{N}) = \tilde{V}\cap \tilde{I}$ implies $\sigma_{d}(\ker(\tilde{N}_{|\tilde{W}}) = I_{d}$ by Theorem \ref{thm:affine}.  
\end{enumerate}
\end{proof}

%To simplify the notation, we will replace $N_i \circ d_h$ by $N_i$.
It follows that once we have a matrix representation of $N^*$ and of the $N_i$, we have that $m_{x_i} = (N^*)^{-1} N_i$ and the matrices $(N^*)^{-1} N_i$ commute, so that for an eigenvalue $\lambda_i = \frac{z_{ji}}{h(z_{j})}$ of $m_{x_i}$ and $\lambda_k = \frac{z_{jk}}{h(z_{j})}$ of $m_{x_k}$ with common eigenvector $v$:
$$ \lambda_k(N^*)^{-1}N_i v = \lambda_k \lambda_i v= \lambda_i (N^*)^{-1} N_k v.$$
Left multiplication by $N^*$ gives $\lambda_k N_i v = \lambda_i N_k v$ and the generalized eigenvalues of $N_i v = \lambda N_k v$ are the fractions $z_{ji}/z_{jk}$. This means that we do not need to construct $N^*$ to find the projective coordinates of the solutions, as long as we have $N_i, i = 0,\ldots, n$ and a generic linear combination of the $N_i$ is invertible. \\

The following proposition shows that the hypotheses of Theorem \ref{prop:projective} can be fulfilled  for $d$ greater than or equal to the regularity and provides a new criterion for detecting the $d$-regularity of a projective zero-dimensional ideal. We recall that the regularity $\reg(I)$ of an ideal $I$ is $\min(d_{i,j}-i)$ where $d_{i,j}$ are the degrees of generators of the $i^{\rm th}$-syzygy module in a minimal resolution of $I$ (see \cite{eisenbud_geometry_2005}). An ideal is $d$-regular if $d\geq \reg(I)$.
\begin{proposition}\label{prop:regularity}
Let $I$ be a homogeneous ideal with $\D < \infty$ solutions in $\PP^n$, counting multiplicities. The following statements are equivalent:
\begin{enumerate}[(i)]
\item There is a linear map $N : \Rh_{d} \rightarrow \C^\D$ with $\ker(N) \subset I \cap \Rh_d$ and $N_h: S_{d-1} \rightarrow \C^\D$ given by $N_h(f) = N(h \cdot f)$ is surjective for generic $h$, \label{linmaps}
\item $I$ is $d$-regular. \label{rhoreg}
\end{enumerate}
\end{proposition}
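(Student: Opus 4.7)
The plan is to prove each implication separately, passing between the homogeneous setting of this section and the affine setting of Section \ref{sec:affine} via the dehomogenization $\sigma$ used in Theorem \ref{prop:projective}.

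For $(ii)\Rightarrow(i)$, I would invoke the standard characterization that a homogeneous zero-dimensional ideal $I$ is $d$-regular if and only if $I_k=I^{\textup{sat}}_k$ for all $k\ge d$ and $\dim_{\C}(S_{k}/I^{\textup{sat}}_{k})=\D$ for all $k\ge d-1$ (see, e.g., \cite{eisenbud_geometry_2005}). In particular $\dim_{\C}S_{d}/I_{d}=\D$, so defining $N\colon S_{d}\to S_{d}/I_{d}\simeq \C^{\D}$ as the quotient map automatically gives $\ker N=I_{d}\subset I$. For a generic linear form $h$ that does not vanish on $V(I)$, $h$ is a non-zero-divisor on $S/I^{\textup{sat}}$, so multiplication by $h$ is an injection $(S/I^{\textup{sat}})_{d-1}\hookrightarrow (S/I^{\textup{sat}})_{d}=S_{d}/I_{d}$ between two $\D$-dimensional spaces and is therefore an isomorphism; consequently $N_{h}(f)=N(hf)$ is surjective.

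For $(i)\Rightarrow(ii)$, I would fix a generic $h$ for which $N_h$ is surjective and, in addition, $V(I)\cap V(h)=\emptyset$; both are non-empty Zariski open conditions on $h\in S_1$, so such $h$ exists. Theorem \ref{prop:projective} then applies and part (iii) supplies the decomposition $S_k=h^{k-d+1}\cdot B\oplus I_k$ for $k\ge d$ (giving $\dim_{\C}S_k/I_k=\D$ in this range) together with the identity $I=(\ideal{\ker N}:h^{*})$. The key technical observation is that for any homogeneous ideal $J\subset S$ with $V(J)\cap V(h)=\emptyset$ one has $(J:h^{*})=J^{\textup{sat}}$: the Nullstellensatz yields $\mathfrak{m}^{N}\subset J+(h)$ for some $N$, and a short expansion shows $(J+(h))^{M}\subset J+(h^{M})$, so $h^{M}f\in J$ implies $\mathfrak{m}^{NM}f\subset J$. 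Applying this to $J=I$ and $J=\ideal{\ker N}$ and using idempotence of $J\mapsto(J:h^{*})$ yields
\[
I^{\textup{sat}}=(I:h^{*})=\bigl((\ideal{\ker N}:h^{*}):h^{*}\bigr)=(\ideal{\ker N}:h^{*})=I,
\]
so $I$ is saturated.

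It remains to establish $\dim_{\C}(S_{d-1}/I_{d-1})=\D$. From the proof of Theorem \ref{prop:projective}, $S_{d-1}=B\oplus\ker N_{h}$ with $\dim B=\D$, and Theorem \ref{thm:affine}(iii) applied to the dehomogenized map $\tilde N$ identifies $\ker N_{h}$ with $\sigma_{d-1}(\tilde I\cap R_{\le d-1})$. A direct computation using the graded isomorphism $S[h^{-1}]_0\simeq R$ gives $\sigma_{k}(\tilde I\cap R_{\le k})=(I:h^{*})_{k}$ for every $k$, which by the previous paragraph equals $I^{\textup{sat}}_{k}=I_{k}$; hence $\ker N_{h}=I_{d-1}$ and $\dim_{\C}(S_{d-1}/I_{d-1})=\D$. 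Since $I$ is saturated and zero-dimensional, $S/I$ is Cohen-Macaulay of Krull dimension one, so its Hilbert function is non-decreasing and bounded above by $\D$; attaining $\D$ at degree $d-1$ forces $\dim_{\C}(S_{k}/I_{k})=\D$ for all $k\ge d-1$, which together with saturation is equivalent to $I$ being $d$-regular. The main obstacle I anticipate is the identification $\sigma_{k}(\tilde I\cap R_{\le k})=(I:h^{*})_{k}$, which requires a careful treatment of how dehomogenization interacts with $h$-saturation in the zero-dimensional setting.
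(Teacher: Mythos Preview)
Your argument is correct, but it takes a substantially different route from the paper's proof. The paper leans entirely on the Bayer--Stillman criterion \cite{bayer1987criterion}[Theorem 1.10], which says that $I$ is $d$-regular if and only if, for generic $h\in S_1$, one has $\langle I,h\rangle_d=S_d$ and $(I:h)_d=I_d$. For $(\ref{linmaps})\Rightarrow(\ref{rhoreg})$ the paper reads both conditions directly off Theorem~\ref{prop:projective}(iii): the decomposition $S_d=h\cdot B\oplus I_d$ gives $\langle I,h\rangle_d=S_d$, and a two-line argument using $S_{d+1}=h^2\cdot B\oplus I_{d+1}$ shows $(I:h)_d=I_d$. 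For $(\ref{rhoreg})\Rightarrow(\ref{linmaps})$ the paper takes $N$ to be the quotient $S_d\to S_d/I_d$ and deduces surjectivity of $N_h$ immediately from $\langle I,h\rangle_d=S_d$. Your proof instead characterizes $d$-regularity through saturation and the Hilbert function, and then establishes both via the lemma $(J:h^*)=J^{\mathrm{sat}}$ (when $V(J)\cap V(h)=\emptyset$), the dehomogenization identity $\sigma_k(\tilde I\cap R_{\le k})=(I:h^*)_k$, and the Cohen--Macaulayness of $S/I^{\mathrm{sat}}$. This is heavier but more self-contained: it explains \emph{why} $I$ is forced to be saturated and to have constant Hilbert function from degree $d-1$, whereas the paper outsources that insight to Bayer--Stillman. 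One small redundancy: once you have $\dim S_k/I_k=\delta$ for $k\ge d$ from Theorem~\ref{prop:projective}(iii) and for $k=d-1$ from $\ker N_h=I_{d-1}$, you already cover all $k\ge d-1$, so the Cohen--Macaulay monotonicity step is not strictly needed.
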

\begin{proof}
  (\ref{linmaps}) $\Rightarrow$ (\ref{rhoreg}). From Proposition \ref{prop:projective} it follows that we can find $B \subset S_{d-1}$ such that $\Rh_d = h \cdot B \oplus I_d$. Therefore $\Rh_{d} = \ideal{ I , h}_{d}$.  Denote $(I : h) = \{f \in \Rh : fh \in I \}$.
 Let $f \in (I:h)_{d}$. Then $f \equiv h\cdot b \mod I_{d}$ with $b\in B$ and $h^{2}\cdot b \in I_{d+1}$. As we have $\Rh_{d+1}= h^{2}\cdot B\oplus I_{d+1}$, we deduce that $b=0$ and $(I : h)_{d}=I_{d}$. 
By \cite{bayer1987criterion}[Theorem 1.10], $I$ is $d$-regular.

 (\ref{rhoreg}) $\Rightarrow$ (\ref{linmaps}).
Assume that $I$ is $d$-regular. Let $\D = \dim_\C( \Rh_{d}/I_{d})$. By \cite{eisenbud_geometry_2005}[Theorem 4.2 (3)], $d$ is greater or equal to the regularity index of the Hilbert function. Therefore $\delta$ is the value of the constant Hilbert polynomial, that is, the number of solutions in $\V(I)$ counting multiplicities. 
Consider a basis $\{ \eta_1, \ldots, \eta_\D \}$ of $I_{d}^{\perp} \subset \Rh_{d}^{*}$. Define 
\begin{alignat*}{2}
	 N: ~ & \Rh_d \longrightarrow \C^\D,  \\
	 & f \longrightarrow (\eta_i(f))_{1\leq i \leq \D}.
\end{alignat*}
%Since $(I:h) = I$ for generic $h$ we have $\ker(N_h) = I_{d-1}$.
By construction, $\ker(N) = I_d$. By $d$-regularity,  $\ideal{ I,h}_d=S_{d}$  for a generic $h\in \Rh_{1}$ (see \cite{bayer1987criterion}[Theorem 1.10]). For any $f \in \Rh_d$, we can write $f = \tilde{f} + hg$ with $\tilde{f} \in I_d$ and $g \in \Rh_{d-1}$. Therefore $N(f) = N(hg)= N_{h}(g)$ and $N(\Rh_d) = N_h(\Rh_{d-1})$.
%\textcolor{red}{If $I$ is $d$-regular, $\dim_\C(I_d^\perp) = \D$ is the number of solutions counting multiplicities.}
\end{proof}
\subsection{Constructing $N$ for square systems}

From the discussion above, $\tilde{N}$ and $N$ have the same matrix representation. 
We show how the maps $N, N^*, N_i$ can be constructed from the null space of the resultant map $M$, used in the affine, dense case: $\rho = \sum_{i=1}^n d_i -(n-1)$. For generic $h = h_0x_0 + \ldots + h_nx_n$, $h_0 \neq 0$, a change of coordinates given by 
$$ \begin{bmatrix}
\hat{x}_0 \\ \hat{x}_1\\ \vdots \\ \hat{x}_n \end{bmatrix} =  \begin{bmatrix}
h_0 & h_1 & \cdots & h_n \\
&1 \\
& & \ddots \\
& & & 1
\end{bmatrix} \begin{bmatrix}
x_0 \\ x_1 \\ \vdots \\ x_n
\end{bmatrix}$$
does not alter the rank of the resultant map $M$ and the resulting system has $\D$ affine solutions in $\PP^n \backslash \{\hat{x}_0 = 0 \}$. In the notation from this section, the associated null space map is $\tilde{N} = N \circ \sigma_\rho$ and $\tilde{N}_{|\tilde{W}} = N_h \circ \sigma_{\rho-1}$ with $\tilde{W} = R_{\leq \rho-1}$ and these maps have all the good properties by the results of Section \ref{sec:affine}. We obtain Algorithm \ref{alg:proj}, where the `homogeneous Macaulay matrix' is the matrix from Algorithm \ref{alg:affine} with columns corresponding to homogeneous polynomials and rows indexed by monomials of degree $\rho$. Note that Algorithm \ref{alg:affine} is  equivalent to Algorithm \ref{alg:proj} when we use $h = x_0$.

\begin{algorithm}[h!]
\caption{Computes the algebra structure of $S_{\rho}/I_{\rho}$}\label{alg:proj}
\begin{algorithmic}[1]
\Procedure{AlgebraStructure}{$f_1,\ldots,f_n$}
\State $M \gets \textup{homogeneous Macaulay matrix of degree } \rho = \sum_{i=1}^n d_i - (n-1)$ \label{constmac}
\State $N \gets \nulll(M^\top)^\top$ \label{nullspace}
\State $\OO_{\rho-1} \gets \textup{monomials of degree $\rho-1$}$
\For{$i=0,\ldots,n$}
\State $N_{|W_i} \gets \textup{columns of $N$ corresponding to $x_i \cdot \OO_{\rho-1}$}$ 
\EndFor
\State $h \gets \textup{generic linear form}$
\State $N_h \gets h(N_{|W_0},\ldots,N_{|W_n})$ 
\State $N^* \gets \textup{columns of $N_h$ corresponding to an invertible submatrix}$
\For{$i=0,\ldots,n$}
\State $N_i \gets \textup{columns of $N_{|W_i}$ corresponding to the columns of $N^*$}$
\State $m_{x_i} \gets (N^*)^{-1}N_i$
\EndFor
\State \textbf{return} $m_{x_0},\ldots, m_{x_n}$
\EndProcedure
\end{algorithmic}
\end{algorithm}

\begin{example}
We give an example of a zero-dimensional system of homogeneous equations coming from an affine system with a solution at infinity. Consider the equations $f_1 = 2x_1^2 + 5x_1x_2 + 3x_2^2+3x_1-2 = 0$ and $f_2 = -2 +x_1+x_2 = 0$. After homogenizing we get 
\begin{eqnarray*}
f_1^h &=& 2x_1^2 + 5x_1x_2 + 3x_2^2+3x_0x_1-2x_0^2 = 0,\\
f_2^h &=& -2x_0+ x_1+x_2= 0,
\end{eqnarray*}
with solutions $z_1 = (0,1,-1), z_2 = (1,-10,12) \in \PP^2$. Since $f_2^h$ is linear, the system could be solved fairly easily by using substitution, but we use this example nonetheless because the matrices involved are not too large and it illustrates the algorithm nicely. We have $\rho = 2$ and we set
$$ w^{(2)}(x_0,x_1,x_2) = \begin{bmatrix}
x_0^2 & x_0x_1 & x_0x_2 & x_1^2& x_1x_2 & x_2^2
\end{bmatrix}.$$
We get a null space matrix
\[
  N=\kbordermatrix{%
     &x_0^2 & x_0x_1 & x_0x_2 & x_1^2& x_1x_2 & x_2^2\\
     w^{(2)}(0,1,-1)& 0 & 0 & 0 &1&-1&1\\
     w^{(2)}(0,-10,12)&1&-10&12&100&-120&144\\
  }.
\]
Note that we cannot apply Algorithm \ref{alg:affine}, since after dehomogenizing by $x_0=1$, there is no invertible submatrix of the degree $1$ part of $N$. The $N_{|W_i}$ are 
$$ N_{|W_0} = \begin{bmatrix}
0&0&0\\1&-10&12
\end{bmatrix}, ~ N_{|W_1}=\begin{bmatrix}
0&1&-1\\-10&100&-120
\end{bmatrix}, ~ N_{|W_2} = \begin{bmatrix}
0& -1 & 1\\12&-120&144
\end{bmatrix}.$$
A generic linear combination of the first 2 columns of these matrices is invertible. We set $N_i$ to be the first two colums of $N_{|W_i}$. We find that the pencil $N_1-\lambda N_0$ has eigenvalues $\infty, -10$, which corresponds to the $x_1$-values of the solutions in the affine chart $x_0=1$. We computed this without constructing $N^*$. For a generic linear form $h$, set $N^* = h(N_0,N_1,N_2)$. The eigenvalues of $(N^*)^{-1}N_i$ are the values of the $i$-th coordinate function at the solutions evaluated at $h(x_0,x_1,x_2) = 1$.
\end{example}

\section{Ideals defining points in $\PP^{n_1} \times \cdots \times \PP^{n_k}$} \label{sec:multihom}
We want to find all roots in $\PP^{n_1} \times \cdots \times \PP^{n_k}$ of a system of multihomogeneous equations. Denote $\Rh = \C[x_{10}, \ldots, x_{1n_1}, \ldots, x_{k0}, \ldots, x_{kn_k}]$ and let $\I = \ideal{ f_1, \ldots, f_s } \subset \Rh$ be an ideal defined by multihomogeneous equations. Here, we take $x_{i0}, \ldots, x_{in_i}$ to be the projective coordinates on the $i$-th factor $\PP^{n_i}$ in $\PP^{n_1} \times \cdots \times \PP^{n_k}$. We assume that $\I$ has $\D < \infty$ solutions in $\PP^{n_1} \times \cdots \times \PP^{n_k}$. By $\Rh_\rho$, $\rho \in \NN^k$ we denote the multidegree $\rho$ part of $\Rh$. That is, $\Rh_\rho$ consists of the elements of $\Rh$ of degree $\rho_i$ in $x_{ij}, j = 0,\ldots,n_i$. Let $V = S_\rho$ and suppose we have $N: V \rightarrow \C^\D$ surjective and $\ker(N) \subset I_\rho = I \cap V$. Denoting $\1 = \sum_{i=1}^k e_i$, we assume that there are linear forms $h_i = h_{i0}x_{i0} + \ldots, h_{in_i}x_{in_i} \in S_{e_i}, i=1,\ldots,k$ such that 
\begin{alignat*}{2}
	 N_{h} : ~ & \Rh_{\rho-\1} \longrightarrow \C^\D,  \\
	 &f \longrightarrow N(h_1\cdots h_k \cdot f) 
\end{alignat*}
is surjective.
% We denote by $d_{h_i}: h_1\cdots h_k \cdot S_{\rho-\1} \rightarrow S_{\rho-e_i}$ the division by $h_i$ such that $d_{h_i}(h_1\cdots h_k \cdot f) = h_1\cdots \hat{h}_i \cdots h_k \cdot f$.
We proceed as in the projective case by defining (de)-homogenization isomorphisms.

Let $R = \C[y_{11},\ldots,y_{1n_1},\ldots,y_{k1},\ldots,y_{kn_k}]$ be the ring of polynomials in $n=\sum_{i=1}^k n_i$ variables. We have homogenization isomorphisms 
\begin{alignat*}{2}
	 \sigma_\rho : ~ & R_{\leq \rho} \longrightarrow \Rh_\rho,  \\
	 &f \longrightarrow h_1^{\rho_1} \cdots h_k^{\rho_k} f \left ( \frac{x_{11}}{h_1},\ldots,\frac{x_{1n_1}}{h_1} \ldots,  \frac{x_{k1}}{h_k},\ldots,\frac{x_{kn_k}}{h_k} \right)
\end{alignat*}
for every $\rho \in \NN^k$. The inverse dehomogenization map is given by 
\begin{alignat*}{2}
	 \sigma^{-1} : ~ & \Rh \longrightarrow R,  \\
	 &f \longrightarrow f \left (\frac{1-\sum_{i=1}^{n_1} h_{1i} y_{1i}}{h_{10}},y_{11},\ldots, y_{1n_1}, \ldots,  \frac{1-\sum_{i=1}^{n_k} h_{ki} y_{ki}}{h_{k0}},y_{k1},\ldots, y_{kn_k} \right).
\end{alignat*}
The ideal $\tilde{I} =  \ideal{ \sigma^{-1}(f_1^h), \ldots, \sigma^{-1}(f_n^h) }$ has $\delta$ solutions in $\C^n$, counting multiplicities. Let $\tilde{V} = R_{\leq \rho}$ and $\tilde{W} = R_{\leq \rho - 1}$. The map $\tilde{N}: \tilde{V} \rightarrow \C^\D$ given by $\tilde{N} = N \circ \sigma_\rho$ is surjective and $\ker(\tilde{N}) \subset \tilde{I} \cap \tilde{V}$. Also, $y_{ij} \cdot \tilde{W} \subset \tilde{V}$, $i = 1,\ldots,k, j = 1,\ldots,n_i$. For $f \in R_{\leq \rho - \1}$, $\sigma_\rho(f) = h_1 \cdots h_k \cdot \sigma_{\rho-\1}(f)$. Therefore $\tilde{N}(R_{\leq \rho-\1}) = N(h_1\cdots h_k \cdot \Rh_{\rho-\1})$ and $\tilde{N}_{|\tilde{W}} = N' \circ \sigma_{\rho-\1}$ is surjective. 

\begin{theorem} \label{prop:multihom}
Let $B \subset \Rh_{\rho-\1}$ be any subspace such that $\Rh_{\rho-\1} = B \oplus \ker(N_{h})$. Under the above assumptions,
\begin{enumerate}[(i)]
\item $N^* = (N_{h})_{|B}$ is invertible,
\item there is an isomorphism of $\C\left [
    \frac{x_{11}}{h_1},\ldots,\frac{x_{1n_1}}{h_1} \ldots,
    \frac{x_{k1}}{h_k},\ldots,\frac{x_{kn_k}}{h_k} \right ]$-modules $
  h_1 \cdots h_k \cdot B \simeq S_{\rho}/\I_{\rho} $,
\item $V = h_1 \cdots h_k \cdot B \oplus V \cap \I$ and $I=(\ideal{ \ker (N)}:
  (h_{1}\cdots h_{k})^{*})$.  
\item the maps $N_{ij}$ given by \begin{alignat*}{2}
	 N_{ij}: ~ & B \longrightarrow \C^\D,  \\  
	 &  b \longrightarrow N(h_1 \cdots \hat{h}_i \cdots h_k \cdot x_{ij}\cdot b)   
\end{alignat*} 
for $i = 1,\ldots,k,j=0,\ldots,n_i$ can be decomposed as $N_{ij} = N^*
\circ m_{x_{ij}}$, where $m_{x_{ij}}$ represent the multiplications by
$x_{ij}/h_i$ in $h_1 \cdots h_k \cdot B$ modulo $\I_{\rho}$ and are commuting.  
\end{enumerate}
\end{theorem}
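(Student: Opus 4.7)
The plan is to mirror the proof of Theorem \ref{prop:projective} and reduce everything to Theorem \ref{thm:affine} through the (de)homogenization isomorphisms $\sigma_\rho, \sigma^{-1}$. The setup we already have at hand is: $\tilde N = N \circ \sigma_\rho: \tilde V = R_{\leq \rho} \to \C^\D$ is surjective with $\ker(\tilde N) \subset \tilde I \cap \tilde V$; the subspace $\tilde W = R_{\leq \rho - \1}$ satisfies $y_{ij} \cdot \tilde W \subset \tilde V$ for all $i,j$; and $\tilde N_{|\tilde W} = N_h \circ \sigma_{\rho-\1}$ is surjective. Also $\tilde W$ contains the unit $1$, so the hypotheses of Theorem \ref{thm:affine} apply to $\tilde N$, $\tilde V$, $\tilde W$, $\tilde I$ with $u = 1$.

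First, given $B \subset \Rh_{\rho-\1}$ with $\Rh_{\rho-\1} = B \oplus \ker(N_h)$, I define $\tilde B = \sigma_{\rho-\1}^{-1}(B) \subset \tilde W$. Because $\sigma_{\rho-\1}$ is an isomorphism intertwining $N_h$ and $\tilde N_{|\tilde W}$, we get $\tilde W = \tilde B \oplus \ker(\tilde N_{|\tilde W})$. Theorem \ref{thm:affine}(i) then gives that $\tilde N_{|\tilde B}$ is invertible, and since $(N_h)_{|B} = \tilde N_{|\tilde B} \circ \sigma_{\rho-\1}^{-1}$, part (i) follows. For part (ii), Theorem \ref{thm:affine}(ii) yields the $R$-module isomorphism $\tilde B \simeq R/\tilde I$; applying $\sigma_\rho$ and using that $\sigma_\rho(f) = h_1 \cdots h_k \cdot \sigma_{\rho-\1}(f)$ for $f \in R_{\leq \rho-\1}$ transforms this into the claimed isomorphism $h_1 \cdots h_k \cdot B \simeq S_\rho/I_\rho$, with the multiplications by $y_{ij}$ on the left corresponding to multiplications by $x_{ij}/h_i$ on the right.

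For part (iii), Theorem \ref{thm:affine}(iii) says $\tilde V = \tilde B \oplus \tilde V \cap \tilde I$; applying $\sigma_\rho$ gives $V = h_1 \cdots h_k \cdot B \oplus V \cap I$. The identity $\tilde I = \langle \ker(\tilde N) \rangle$ translates, after homogenizing and using that $\sigma^{-1}$ sends powers of $h_1 \cdots h_k$ to $1$, into $I = (\langle \ker(N) \rangle : (h_1 \cdots h_k)^*)$ in $\Rh$, which is the statement about the saturation.

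Part (iv) is the main technical step and the place I expect friction. Let $m_{y_{ij}}: \tilde B \to \tilde B$ be the affine multiplications from Theorem \ref{thm:affine}, and define
\[
m_{x_{ij}} = \sigma_{\rho-\1} \circ m_{y_{ij}} \circ \sigma_{\rho-\1}^{-1}: B \to B
\]
for the variables $x_{ij}$ with $j \geq 1$, and for the $x_{i0}$ use the linear relation $x_{i0}/h_i = (1 - \sum_{j \geq 1} h_{ij} y_{ij})/h_{i0}$ pulled through $\sigma$. Given $b \in B$ with $\tilde b = \sigma_{\rho-\1}^{-1}(b)$, the affine theorem gives $m_{y_{ij}}(\tilde b) \equiv y_{ij} \tilde b \pmod{\tilde I}$, and applying $\sigma_\rho$ converts this into $h_1 \cdots h_k \cdot m_{x_{ij}}(b) \equiv x_{ij} \cdot h_1 \cdots \hat h_i \cdots h_k \cdot b \pmod{I_\rho}$, i.e.\ $m_{x_{ij}}$ really is multiplication by $x_{ij}/h_i$ in $h_1 \cdots h_k \cdot B \simeq S_\rho/I_\rho$. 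The commutation of the $m_{x_{ij}}$ is then inherited from the commutation of the $m_{y_{ij}}$, together with the linear dependence for the $m_{x_{i0}}$. Finally, the factorization $N_{ij} = N^* \circ m_{x_{ij}}$ is obtained by unwinding the definitions: $N(h_1 \cdots \hat h_i \cdots h_k \cdot x_{ij} \cdot b) = \tilde N(y_{ij} \cdot \tilde b) = \tilde N_{|\tilde B}(m_{y_{ij}}(\tilde b))$, which after transport by $\sigma$ is exactly $(N^* \circ m_{x_{ij}})(b)$. The bookkeeping with the hat factor $\hat h_i$ and the definition of $m_{x_{i0}}$ through the linear combination is where the bulk of the work lies, but it is a direct multihomogeneous analog of the projective case.
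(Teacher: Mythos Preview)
Your proposal is correct and is precisely the approach the paper intends: the paper's own proof is the single line ``All statements follow from Theorem \ref{thm:affine} as in the proof of Theorem \ref{prop:projective},'' and you have faithfully carried out that multihomogeneous analog, transporting $\tilde B = \sigma^{-1}(B)$ through Theorem \ref{thm:affine} and pushing the conclusions back via $\sigma_\rho$. The only cosmetic difference is that the paper writes the degree-independent dehomogenization as $\sigma^{-1}$ rather than $\sigma_{\rho-\1}^{-1}$.
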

\begin{proof}
All statements follow from Theorem \ref{thm:affine} as in the proof of Theorem \ref{prop:projective}.
\end{proof}

\subsection{Constructing $N$ for square systems}
We show that the maps $N, N^*, N_{ij}$ can be constructed from the null space of the toric resultant map $M$ as defined for the affine sparse case. Let $I = \ideal{ f_1,\ldots, f_n }$ be defined by $n = n_1 + \ldots + n_k$ multihomogeneous polynomials of degrees $d_i \in \NN^k$. A change of projective coordinates within each factor $\PP^{n_i}$ does not alter the rank of the resultant map. Take
$$ \begin{bmatrix} \hat{x}_{1} \\ \hat{x}_{2} \\  \vdots \\ \hat{x}_{k}\end{bmatrix} = \begin{bmatrix}
H_1 \\ & H_2 \\ && \ddots \\ &&& H_k
\end{bmatrix}  \begin{bmatrix} x_1\\ x_2 \\ \vdots \\ x_k\end{bmatrix}, \quad H_i = \begin{bmatrix}
h_{i0} & h_{i1} & \ldots & h_{in_i}\\
& 1 \\ & & \ddots \\ & & & 1
\end{bmatrix}$$
where $\hat{x}_i, x_i$ are short for $(\hat{x}_{i0}, \ldots, \hat{x}_{in_i})^\top$  and $(x_{i0},\ldots, x_{in_i})^\top$ respectively. Using the notation in this chapter, the resulting ideal  after dehomogenization w.r.t. the $\hat{x}_{i0}$ is $\tilde{I} = \ideal{
\tilde{f}_1,\ldots, \tilde{f}_n } \subset R = \C[\hat{x}_1,\ldots, \hat{x}_k]$ with $\D = \MV(P_1,\ldots, P_n)$ solutions in $\C^n$, counting multiplicities. We may assume that all $\D$ solutions lie in $(\C^*)^n$, since we can apply another generic block diagonal change of coordinates. Next, we consider a generic polynomial $\tilde{f}_0 \in R_{\leq \1}$, so $d_0=\1$. We denote $\rho = \sum_{i=0}^n d_i - \1$ and $\rho_i = \rho-d_i, i = 0,\ldots,n$ and we consider the resultant map\begin{eqnarray*}
	 \tilde{M}_0 : \quad  \tilde{V}_{0}\times \tilde{V}_1 \times \cdots \times \tilde{V}_n& \longrightarrow& \tilde{V}  \\
	 (q_{0},q_1, \ldots, q_n) &\longmapsto & q_{0}\tilde{f}_{0}+ q_1\tilde{f}_1+ \cdots + q_n\tilde{f}_n.
\end{eqnarray*}
with $\tilde{V}_i = R_{\leq \rho_i}$ and $\tilde{V} = R_{\leq \rho}$. Note that this corresponds to the $V_i, V$ in the affine, sparse case, where we take a vector $v = \epsilon(-1,\ldots,-1) \in \R^n$ with $\epsilon > 0$, small. We denote $\tilde{W} = R_{\leq \rho - \1}$. By the discussion in Section \ref{sec:toric}, the null space map $\tilde{N}$ associated to $(\tilde{M_0})_{|\tilde{V}_1 \times \cdots \times \tilde{V}_n}$ and the map $\tilde{N}_{|\tilde{W}}$ have all the good properties. By construction, $\tilde{N} = N \circ \sigma_\rho$ and $\tilde{N}_{|\tilde{W}} = N_h \circ \sigma_{\rho - \1}$ where $N$ is the null space map associated to \begin{eqnarray*}
	 M : \quad  V_{1} \times \cdots \times {V}_n& \longrightarrow& {V}  \\
	 (q_1, \ldots, q_n) &\longmapsto &  q_1f_1+ \cdots + q_nf_n,
\end{eqnarray*}
with $V_i = \Rh_{\rho_i}$ and $V = \Rh_\rho$.\\

In Algorithm \ref{alg:multihom} we use the notation $\textup{vec}: \Rh_\rho \rightarrow \C^{m_\rho}$, where $m_\rho = \dim_\C(V)$ is the number of rows of the matrix $M$, for the map that sends a multihomogeneous polynomial of degree $\rho$ to its column vector representation corresponding to the monomials in the support of $M$.
%Also, to simplify the notation, we replace $N_{ij} \circ d_{h_i}$ by $N_{ij}$.
\begin{algorithm}[ht!]
\caption{Computes the algebra structure of $S_{\rho}/I_{\rho}$}\label{alg:multihom}
\begin{algorithmic}[1]
\Procedure{AlgebraStructure}{$f_1,\ldots,f_n$}
\State $M \gets \textup{the multihomogeneous Macaulay matrix of degree $\rho$} $ \label{constmac-multihom}
\State $N \gets \nulll(M)^\top$ \label{nullspace-multihom}
\State $\OO_{\rho-\1} \gets \textup{monomials of degree $\rho-\1$}$
\For{$i=1,\ldots,k$}
\State $h_i \gets \textup{generic linear form of degree $e_i$}$ 
\EndFor
\State $K \gets \textup{empty matrix}$
\For{$m \in \OO_{\rho-\1}$}
\State $K \gets \begin{bmatrix}
K & \textup{vec}(h_1\cdots h_k \cdot m)
\end{bmatrix} $
\EndFor
\State $N_h \gets NK$
\State $N^* \gets \textup{columns of $N_h$ corresponding to an invertible submatrix}$
\State $\OO \gets \textup{monomials in $\OO_{\rho-\1}$ corresponding to the columns of $N^*$}$
\For{$i=1,\ldots,k$}
\For{$j= 0,\ldots,n_i$}
\State $K_{ij} \gets \textup{empty matrix}$
\For{$m \in \OO$}
\State $K_{ij} \gets \begin{bmatrix}
K_{ij} & \textup{vec}(h_1\cdots \hat{h}_i \cdots h_k \cdot x_{ij} \cdot m)
\end{bmatrix}$
\EndFor
\State $N_{ij} \gets N K_{ij}$
\State $m_{x_{ij}} = (N^*)^{-1}N_{ij}$
\EndFor
\EndFor
\State \textbf{return} $m_{x_{ij}},i=1,\ldots,k, j= 0,\ldots n_i$
\EndProcedure
\end{algorithmic}
\end{algorithm}

\begin{example}
We work out an example in $\PP^1 \times \PP^1$. We start with the affine equations $f_1 = 2-x_1+2x_2+2x_1x_2 = 0$ and $f_2 = 4 - 2x_1+x_2+4x_1x_2=0$. Homogenizing we get 
\begin{eqnarray*}
f_1^h &=& 2x_{10}x_{20} - x_{20}x_{11} +2x_{10}x_{21} + 2 x_{11}x_{21},\\
f_2^h &=& 4x_{10}x_{20} - 2x_{20}x_{11} +x_{10}x_{21} + 4 x_{11}x_{21}
\end{eqnarray*}
Using the coordinates $(x_{10},x_{11},x_{20},x_{21})$ on $\PP^1\times \PP^1$, the solutions are $z_1 = (1,2,1,0), \newline z_2 = (0,1,1,1/2)$. Note that $z_2$ corresponds to a solution `at infinity', in the sense that it lies on the torus invariant divisor $x_{10}=0$. A null space matrix is 
\setcounter{MaxMatrixCols}{20}
$$N = \begin{bmatrix}
1 & 2 & 4 & 8 & 0 & 0 & 0 & 0     & 0 & 0 & 0 & 0     & 0 & 0 & 0 & 0 \\
0 & 0 & 0 & 1 & 0 & 0 & 0 & 1 / 2 & 0 & 0 & 0 & 1 / 4 & 0 & 0 & 0 & 1/8
\end{bmatrix}$$
where the first row corresponds to $z_1$ and the second to $z_2$ and the columns correspond to the monomials 
\begin{center}
 $x_{10}^3x_{20}^3, x_{10}^2x_{11}x_{20}^3, x_{10}x_{11}^2x_{20}^3,x_{11}^3x_{20}^3, x_{10}^3x_{20}^2x_{21}, x_{10}^2x_{11}x_{20}^2x_{21}, x_{10}x_{11}^2x_{20}^2x_{21},x_{11}^3x_{20}^2x_{21}, \newline  x_{10}^3x_{20}x_{21}^2, x_{10}^2x_{11}x_{20}x_{21}^2, x_{10}x_{11}^2x_{20}x_{21}^2,x_{11}^3x_{20}x_{21}^2, x_{10}^3x_{21}^3, x_{10}^2x_{11}x_{21}^3, x_{10}x_{11}^2x_{21}^3,x_{11}^3x_{21}^3 $
\end{center}
in that order. In this example, we can take $h_1 = x_{10} + x_{11}$, $h_2 = x_{20}+x_{21}$. For $\OO = \{x_{11}^2x_{21}^2,x_{10}x_{11}x_{20}^2\}$, with respect to the same set of monomials, we find 
\begin{eqnarray*}
v_1 = \textup{vec}(h_1\cdot h_2\cdot x_{11}^2x_{21}^2) &=& \begin{bmatrix}
0&0&0&0&0&0&0&0&0&0&1&1&0&0&1&1
\end{bmatrix}^\top,\\
v_2 = \textup{vec}(h_1\cdot h_2 \cdot x_{10}x_{11}x_{20}^2) &=&
\begin{bmatrix}
0&1&1&0&0&1&1&0&0&0&0&0&0&0&0&0
\end{bmatrix}^\top
\end{eqnarray*}
and with $\tilde{K} = \begin{bmatrix}
v_1 & v_2 \end{bmatrix}$ we find
$$N^* = N \tilde{K} = \begin{bmatrix}
0&6\\3/8&0
\end{bmatrix}$$
invertible. Then, $$K_{10} = \begin{bmatrix}\textup{vec}(h_2 \cdot x_{10} \cdot x_{11}^2x_{21}^2) & \textup{vec}(h_2 \cdot x_{10} \cdot x_{10}x_{11}x_{20}^2) \end{bmatrix} = \begin{bmatrix}
e_{11}+e_{15} & e_{2}+e_{6}
\end{bmatrix}$$
which gives $N_{10} = \begin{bmatrix}
0&2\\0&0
\end{bmatrix}$. Analogously, 
$$K_{11} = \begin{bmatrix}\textup{vec}(h_2 \cdot x_{11} \cdot x_{11}^2x_{21}^2) & \textup{vec}(h_2 \cdot x_{11} \cdot x_{10}x_{11}x_{20}^2) \end{bmatrix} = \begin{bmatrix}
e_{12}+e_{16} & e_{3}+e_{7}
\end{bmatrix}$$
which gives $N_{11} = \begin{bmatrix}
0&4\\3/8&0
\end{bmatrix}$. We find that the eigenvalues of $(N^*)^{-1}N_{10}$ are 0 and 1/3, corresponding to $\frac{x_{10}}{h_1}(z_i)$. For the generalized eigenvalue problem defined by $N_{11} - \lambda N_{10}$, we find eigenvalues 2 and $\infty$, corresponding to the $x_1$-coordinates of the affine solutions of the original system of equations. One can check the corresponding properties of $(N^*)^{-1}N_{11}$ and construct the matrices $N_{20}, N_{21}$ in the same way. 
\end{example}

\section{Finding roots from multiplication tables} \label{sec:tablestosolutions}
Before showing some more experiments, we discuss how to find the $\D$
solutions from the output of the algorithms in this paper using
algorithms from numerical linear algebra. To give a general
description, suppose $m_{g_i}, i = 1,\ldots,n$ are the matrices
corresponding to multiplication by the $n$ generators $g_i$ of a
$\C$-algebra $A$ (be it $R/I, R/I^*$ or $\Rh_\rho/I_\rho\sim
\C[\frac{x_{0}}{h},\ldots, \frac{x_{n}}{h}]/\tilde{I}$ ) in some basis. These matrices share a set of $\D_0$ invariant subspaces, each associated to one of the isolated solutions in $\V(I)$ \cite{elkadi_introduction_2007}. We treat the case of simple roots and the case of roots with multiplicities $\mu_i > 1$ separately.

\subsection{Simple roots: simultaneous diagonalization}
The matrices $m_{g_1}, \ldots, m_{g_n}$ commute and have common eigenvectors. The eigenvalues of $m_{g_i}$ are $g_i(z_j), j = 1, \ldots, \D$. The $m_{g_i}$ can be diagonalized simultaneously. We can compute the common eigenvectors by diagonalizing a generic linear combination $m^*$ of the $m_{g_i}$: $m^* = h(m_{g_1},\ldots, m_{g_n})= \sum_{i=1}^n h_i m_{g_i}$, such that with probability one, all of the eigenvalues $h(g_1, \ldots, g_n)(z_j), j= 1, \ldots, \D$ are distinct and the eigenvectors are well separated. Let $g^* = h(g_1, \ldots, g_n)$, we find $Pm^*P^{-1} = J^*$ with $J^* = \diag(g^*(z_1), \ldots, g^*(z_\D))$. Applying the same transformation to the $m_{g_i}$ gives $Pm_{g_i} P^{-1} = \diag(g_i(z_1), \ldots, g_i(z_\D))$ where the order of the roots corresponding to the diagonal elements is preserved. If the $g_i$ are coordinate functions, we can read off the coordinates of the $\D$ roots from the diagonals of the $Pm_{g_i} P^{-1}$. 

We note that a simultaneous diagonalization of a set of commuting matrices in the non defective case is equivalent to the tensor rank decomposition of a third order tensor \cite{dL2006}. It is possible to use tensor algorithms to refine the solutions obtained by the algorithm described above. The routine \texttt{cpd\_gevd} in Tensorlab can be used for this computation \cite{vervliet2016tensorlab}. 

An alternative is to compute the complex Schur form of $m^*$: $Um^* U^H = T^*$, with $U$ orthogonal, $T^*$ upper triangular and $\cdot ^H$ denotes the Hermitian transpose. The same transformation makes the $m_{g_i}$ upper triangular: $Um_{g_i}U^H = T_i$ and the solutions can be read off from the diagonals of the $T_i$. 
\subsection{Multiple roots: simultaneous block triangularization}
We compute the Jordan form of $m^*$. Let $P m^* P^{-1} = J^*$ with 
$$ J^* = \begin{bmatrix}
J^*_1 \\ & J^*_2 \\ & & \ddots \\ & & & J^*_{\D_0}
\end{bmatrix} = \diag(J^*_1,\ldots,J^*_{\D_0}),$$
such that $J^*_i$ is of size $\mu_i \times \mu_i$, upper triangular with diagonal elements all equal to $g^*(z_i)$. Then $Pm_{g_i}P^{-1} = J_i= \diag (J_{i1}, \ldots, J_{i\D_0})$ with $J_{ij}$ of size $\mu_j \times \mu_j$, upper triangular with diagonal elements equal to $g_i(z_j)$.\footnote{Note that the $J_i$ are not necessarily a Jordan form of the $m_{g_i}$, they may have a different upper triangular nonzero structure than just an upper diagonal of ones \cite{elkadi_introduction_2007,stetter}.} This way, the solutions, along with their multiplicities, can be found from this simultaneous upper triangularization of the $m_{g_i}$. Unfortunately, the Jordan form of a defective matrix is very ill conditioned and its computation is not possible in finite precision arithmetic.

Since we are interested in numerical methods using finite precision arithmetic, we use the following alternative method \cite{corless1997reordered}. We compute the Schur form of $m^*$: $\tilde{U} m^* \tilde{U}^H = \tilde{T}^*$, with $\tilde{U}$ orthogonal and $\tilde{T}^*$ upper triangular. If there are solutions with multiplicity $> 1$, some elements on the diagonal of $\tilde{T}^*$ appear multiple times. Next, we use a clustering of the diagonal elements of $\tilde{T}^*$ and reorder the factorization $U m^* U^H = T^*$ such that $U$ is orthogonal, $T^*$ is upper triangular and the diagonal elements are clustered. The same transformation makes the $m_{g_i}$ block upper triangular with $\D_0$ diagonal blocks of size $\mu_j \times \mu_j, j = 1, \ldots, \D_0$ corresponding to the clusters on the diagonal of $T^*$. All of the diagonal blocks only have one eigenvalue, which is $g_i(z_j)$. For more details on this approach we refer to \cite{corless1997reordered}. Another approach based on the intersection of eigenspaces is given in \cite{moller2001multivariate} and \cite{graillat2009new}.

\section{Numerical examples} \label{sec:numexp}

We give a few more examples in which we use the algorithms in this paper to solve bigger systems. All computations are performed using Matlab on an 8 GB RAM machine with an intel Core i7-6820HQ CPU working at 2.70 GHz. To measure the quality of the solutions, we use the residual as defined in \cite{telen2017stabilized}. 
 
\subsection{Affine solutions of a sparse 3-variate system}
We consider the system given by 
\begin{eqnarray}
f_1 &=& 12x_1x_2x_3^{12} + 7x_1^2x_2^7x_3^6 + 4x_1^{10}x_2^{11}x_3^8 +4x_1^{6}x_2^4x_3^7+5,\\
f_2 &=& 15x_1^{10}x_2^4x_3^2+4x_1^3x_2^6x_3^6+10x_1x_2^{10}x_3^8 + 11x_1^6x_2^{11}x_3^8+12,\\
f_3 &=& 10x_1^7x_2^4x_3^6+4x_1^{10}x_2x_3 + 4x_1^2x_2^{12}x_3^9+14x_1^{10}x_2^5x_3+2.
\end{eqnarray}
The mixed volume (computed using PHCpack \cite{verschelde1999algorithm}) is 2352. Constructing the Macaulay matrix supported in $\sum_{i=1}^3 P_i + \Delta_3 + v$ where $\Delta_3$ is the simplex in $\R^3$ and $v$ is a random small vector, Algorithm \ref{alg:toric} finds 2352 solutions, 2 of which are real. All solutions lie in $(\C^{*})^{3}$, so in this example $I = I^*$. The real solutions are depicted in Figure \ref{fig:exsparse} together with a picture of the surfaces defined by the $f_i$ in $\R^3$.
\begin{figure}
\centering
\includegraphics[scale=0.7]{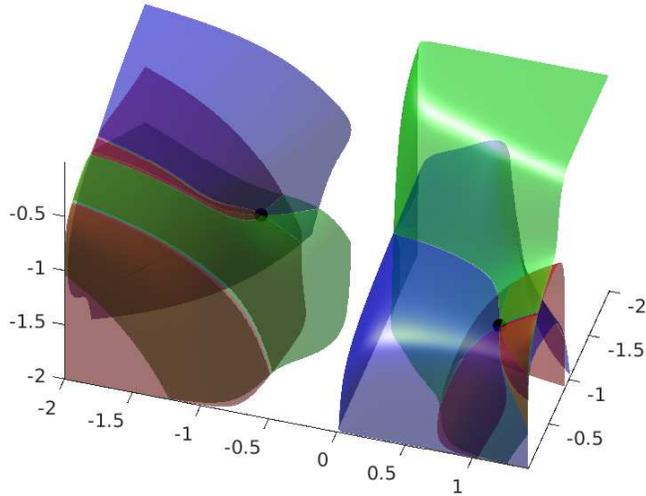}
\caption{Surfaces in $\R^3$ defined by $f_1, f_2$ and $f_3$ (blue, red, green respectively) and the real solutions found using Algorithm \ref{alg:toric}.}
\label{fig:exsparse}
\end{figure}
All solutions are simple. They are found by a Schur decomposition of the $m_{x_i}, i= 1,\ldots, 3$. Computations with polytopes (except for the mixed volume) are done using polymake \cite{polymake:FPSAC_2009}. We used QR with optimal column pivoting on $N_{|W}$ for the basis choice \cite{telen2017stabilized}. The total computation time is about 294 seconds. All solutions are found with a residual smaller than $3.1 \cdot 10^{-12}$.

\subsection{Affine solutions of a generic dense system}
We consider generic dense systems in the sense of \cite{telen2017stabilized}. We compute the solution by decomposing the tensor defined by the $N_i$ from Algorithm \ref{alg:affine} and choose the basis using QR with pivoting. For this type of systems, the basis choice made in Algorithm \ref{alg:affine} agrees with the basis chosen in \cite{telen2017stabilized}. Also here, the monomials at the border of the support are preferred. Figure \ref{fig:basis} shows the basis that is selected for a bivariate system with $d_1=d_2=15$. 

\begin{figure}
\centering
% This file was created by matlab2tikz.
%
%The latest updates can be retrieved from
%  http://www.mathworks.com/matlabcentral/fileexchange/22022-matlab2tikz-matlab2tikz
%where you can also make suggestions and rate matlab2tikz.
%
\begin{tikzpicture}

\begin{axis}[%
width=2.5in,
height=2.5in,
at={(0.48in,0.366in)},
scale only axis,
xmin=0,
xmax=30,
xlabel = $\deg(x_1)$,
ymin=0,
ymax=30,
ylabel = $\deg(x_2)$,
axis background/.style={fill=white}
]
\addplot [color=red, only marks, draw=none, mark=*, mark options={solid, red}, forget plot]
  table[row sep=crcr]{%
0	0\\
0	1\\
0	2\\
0	3\\
0	4\\
0	5\\
1	0\\
28	0\\
0	28\\
1	1\\
27	1\\
2	0\\
27	0\\
0	27\\
0	6\\
26	0\\
0	7\\
0	26\\
0	25\\
1	2\\
3	0\\
1	27\\
2	1\\
4	0\\
1	3\\
26	1\\
25	0\\
0	8\\
24	0\\
0	24\\
23	0\\
2	26\\
1	26\\
22	0\\
26	2\\
3	1\\
0	23\\
25	1\\
0	9\\
0	22\\
21	0\\
0	12\\
3	25\\
0	10\\
2	25\\
1	5\\
1	25\\
5	0\\
2	2\\
20	0\\
0	11\\
25	3\\
24	1\\
1	4\\
4	24\\
1	24\\
4	1\\
0	21\\
8	0\\
6	0\\
3	24\\
7	0\\
24	4\\
2	3\\
9	0\\
2	24\\
1	6\\
10	0\\
0	13\\
23	1\\
2	23\\
5	23\\
23	5\\
2	6\\
25	2\\
19	0\\
0	20\\
17	0\\
18	0\\
22	1\\
11	0\\
1	23\\
22	6\\
2	4\\
2	5\\
0	14\\
3	23\\
0	18\\
5	1\\
2	22\\
21	1\\
1	22\\
1	8\\
4	23\\
24	3\\
3	2\\
0	19\\
6	22\\
23	4\\
3	22\\
12	0\\
2	21\\
0	17\\
1	21\\
16	0\\
1	7\\
0	15\\
14	0\\
2	7\\
5	22\\
22	5\\
20	8\\
24	2\\
7	21\\
6	1\\
1	9\\
21	7\\
0	16\\
2	20\\
20	1\\
22	2\\
21	6\\
2	8\\
4	2\\
4	22\\
3	21\\
13	0\\
1	10\\
23	2\\
1	20\\
9	19\\
23	3\\
3	3\\
2	19\\
19	1\\
8	20\\
1	12\\
8	1\\
21	5\\
20	7\\
4	21\\
21	2\\
3	20\\
5	2\\
2	18\\
12	16\\
6	21\\
20	5\\
17	11\\
9	1\\
19	9\\
10	18\\
20	6\\
7	1\\
3	19\\
16	12\\
17	1\\
14	14\\
1	19\\
18	1\\
15	0\\
16	1\\
3	6\\
11	17\\
4	5\\
19	2\\
18	2\\
3	5\\
13	15\\
2	9\\
5	21\\
1	18\\
20	2\\
19	8\\
19	6\\
1	16\\
10	1\\
2	15\\
15	13\\
19	5\\
5	20\\
18	10\\
4	6\\
12	1\\
6	2\\
1	15\\
19	7\\
14	1\\
15	1\\
8	19\\
22	4\\
4	20\\
7	19\\
17	2\\
17	8\\
5	3\\
22	3\\
3	18\\
11	1\\
2	17\\
2	11\\
15	12\\
17	6\\
14	13\\
21	4\\
18	5\\
9	18\\
16	5\\
7	17\\
4	8\\
1	11\\
11	16\\
1	14\\
18	3\\
1	13\\
6	18\\
14	2\\
4	18\\
8	5\\
18	6\\
4	17\\
13	13\\
9	15\\
9	17\\
3	17\\
}; \label{reddots}
\addplot [color=black, only marks, draw=none, mark=o, mark options={solid, black}, forget plot]
  table[row sep=crcr]{%
0	0\\
1	0\\
0	1\\
2	0\\
1	1\\
0	2\\
3	0\\
2	1\\
1	2\\
0	3\\
4	0\\
3	1\\
2	2\\
1	3\\
0	4\\
5	0\\
4	1\\
3	2\\
2	3\\
1	4\\
0	5\\
6	0\\
5	1\\
4	2\\
3	3\\
2	4\\
1	5\\
0	6\\
7	0\\
6	1\\
5	2\\
4	3\\
3	4\\
2	5\\
1	6\\
0	7\\
8	0\\
7	1\\
6	2\\
5	3\\
4	4\\
3	5\\
2	6\\
1	7\\
0	8\\
9	0\\
8	1\\
7	2\\
6	3\\
5	4\\
4	5\\
3	6\\
2	7\\
1	8\\
0	9\\
10	0\\
9	1\\
8	2\\
7	3\\
6	4\\
5	5\\
4	6\\
3	7\\
2	8\\
1	9\\
0	10\\
11	0\\
10	1\\
9	2\\
8	3\\
7	4\\
6	5\\
5	6\\
4	7\\
3	8\\
2	9\\
1	10\\
0	11\\
12	0\\
11	1\\
10	2\\
9	3\\
8	4\\
7	5\\
6	6\\
5	7\\
4	8\\
3	9\\
2	10\\
1	11\\
0	12\\
13	0\\
12	1\\
11	2\\
10	3\\
9	4\\
8	5\\
7	6\\
6	7\\
5	8\\
4	9\\
3	10\\
2	11\\
1	12\\
0	13\\
14	0\\
13	1\\
12	2\\
11	3\\
10	4\\
9	5\\
8	6\\
7	7\\
6	8\\
5	9\\
4	10\\
3	11\\
2	12\\
1	13\\
0	14\\
15	0\\
14	1\\
13	2\\
12	3\\
11	4\\
10	5\\
9	6\\
8	7\\
7	8\\
6	9\\
5	10\\
4	11\\
3	12\\
2	13\\
1	14\\
0	15\\
16	0\\
15	1\\
14	2\\
13	3\\
12	4\\
11	5\\
10	6\\
9	7\\
8	8\\
7	9\\
6	10\\
5	11\\
4	12\\
3	13\\
2	14\\
1	15\\
0	16\\
17	0\\
16	1\\
15	2\\
14	3\\
13	4\\
12	5\\
11	6\\
10	7\\
9	8\\
8	9\\
7	10\\
6	11\\
5	12\\
4	13\\
3	14\\
2	15\\
1	16\\
0	17\\
18	0\\
17	1\\
16	2\\
15	3\\
14	4\\
13	5\\
12	6\\
11	7\\
10	8\\
9	9\\
8	10\\
7	11\\
6	12\\
5	13\\
4	14\\
3	15\\
2	16\\
1	17\\
0	18\\
19	0\\
18	1\\
17	2\\
16	3\\
15	4\\
14	5\\
13	6\\
12	7\\
11	8\\
10	9\\
9	10\\
8	11\\
7	12\\
6	13\\
5	14\\
4	15\\
3	16\\
2	17\\
1	18\\
0	19\\
20	0\\
19	1\\
18	2\\
17	3\\
16	4\\
15	5\\
14	6\\
13	7\\
12	8\\
11	9\\
10	10\\
9	11\\
8	12\\
7	13\\
6	14\\
5	15\\
4	16\\
3	17\\
2	18\\
1	19\\
0	20\\
21	0\\
20	1\\
19	2\\
18	3\\
17	4\\
16	5\\
15	6\\
14	7\\
13	8\\
12	9\\
11	10\\
10	11\\
9	12\\
8	13\\
7	14\\
6	15\\
5	16\\
4	17\\
3	18\\
2	19\\
1	20\\
0	21\\
22	0\\
21	1\\
20	2\\
19	3\\
18	4\\
17	5\\
16	6\\
15	7\\
14	8\\
13	9\\
12	10\\
11	11\\
10	12\\
9	13\\
8	14\\
7	15\\
6	16\\
5	17\\
4	18\\
3	19\\
2	20\\
1	21\\
0	22\\
23	0\\
22	1\\
21	2\\
20	3\\
19	4\\
18	5\\
17	6\\
16	7\\
15	8\\
14	9\\
13	10\\
12	11\\
11	12\\
10	13\\
9	14\\
8	15\\
7	16\\
6	17\\
5	18\\
4	19\\
3	20\\
2	21\\
1	22\\
0	23\\
24	0\\
23	1\\
22	2\\
21	3\\
20	4\\
19	5\\
18	6\\
17	7\\
16	8\\
15	9\\
14	10\\
13	11\\
12	12\\
11	13\\
10	14\\
9	15\\
8	16\\
7	17\\
6	18\\
5	19\\
4	20\\
3	21\\
2	22\\
1	23\\
0	24\\
25	0\\
24	1\\
23	2\\
22	3\\
21	4\\
20	5\\
19	6\\
18	7\\
17	8\\
16	9\\
15	10\\
14	11\\
13	12\\
12	13\\
11	14\\
10	15\\
9	16\\
8	17\\
7	18\\
6	19\\
5	20\\
4	21\\
3	22\\
2	23\\
1	24\\
0	25\\
26	0\\
25	1\\
24	2\\
23	3\\
22	4\\
21	5\\
20	6\\
19	7\\
18	8\\
17	9\\
16	10\\
15	11\\
14	12\\
13	13\\
12	14\\
11	15\\
10	16\\
9	17\\
8	18\\
7	19\\
6	20\\
5	21\\
4	22\\
3	23\\
2	24\\
1	25\\
0	26\\
27	0\\
26	1\\
25	2\\
24	3\\
23	4\\
22	5\\
21	6\\
20	7\\
19	8\\
18	9\\
17	10\\
16	11\\
15	12\\
14	13\\
13	14\\
12	15\\
11	16\\
10	17\\
9	18\\
8	19\\
7	20\\
6	21\\
5	22\\
4	23\\
3	24\\
2	25\\
1	26\\
0	27\\
28	0\\
27	1\\
26	2\\
25	3\\
24	4\\
23	5\\
22	6\\
21	7\\
20	8\\
19	9\\
18	10\\
17	11\\
16	12\\
15	13\\
14	14\\
13	15\\
12	16\\
11	17\\
10	18\\
9	19\\
8	20\\
7	21\\
6	22\\
5	23\\
4	24\\
3	25\\
2	26\\
1	27\\
0	28\\
29	0\\
28	1\\
27	2\\
26	3\\
25	4\\
24	5\\
23	6\\
22	7\\
21	8\\
20	9\\
19	10\\
18	11\\
17	12\\
16	13\\
15	14\\
14	15\\
13	16\\
12	17\\
11	18\\
10	19\\
9	20\\
8	21\\
7	22\\
6	23\\
5	24\\
4	25\\
3	26\\
2	27\\
1	28\\
0	29\\
}; \label{blackcircles}
\end{axis}
\end{tikzpicture}%
\caption{Support of the Macaulay matrix (\ref{blackcircles}) and basis of $R/I$ (\ref{reddots}) chosen by Algorithm \ref{alg:affine} for a generic dense bivariate system with $d_1=d_2=15$. The bivariate monomials are identified with $\Z^2$ in the usual way.}
\label{fig:basis}
\end{figure}
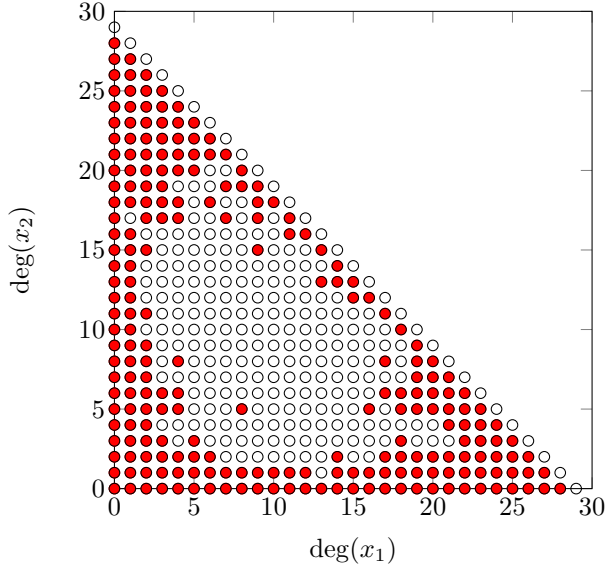

\subsection{Projective solution of a dense system with a solution at infinity} \label{numex:proj}
We use Algorithm \ref{alg:proj} to find the projective coordinates of the 77 solutions in $\PP^2$ of a bivariate system with $d_1 = 7,d_2=11$. There are 7 real solutions, one of which lies at infinity: $(0,1,1)$ where the first coordinate corresponds to the homogenization variable. The algorithm returns all of the projective solutions with a residual smaller than $5.24 \cdot 10^{-14}$ within about a tenth of a second. The left part of Figure \ref{fig:exproj} shows the real solutions in the affine chart $x_0=1$ of $\PP^2$ (there are 6). The right part of the figure shows all real solutions in $\PP^2$ represented as rays connecting the origin in $\C^3$ with a point on the unit sphere. Note that one of the rays (the bold one) is contained in the plane at infinity, and it is also contained in the plane $x_1-x_2=0$, which corresponds to the solution $(0,1,1)$.
\begin{figure}[ht!]
\centering
\input{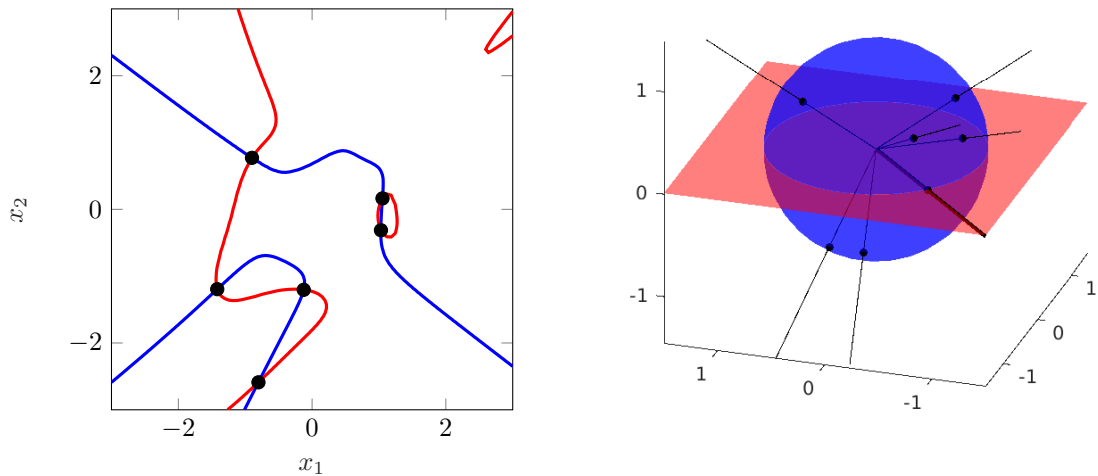}
\caption{Left: picture in $\R^2$ of the solution set in the affine chart $x_0=1$ of $\PP^2$ of the system described in Example \ref{numex:proj}. Right: visualization of the real solutions in $\PP^2$ with the unit sphere in blue and the plane `at infinity' in red.}
\label{fig:exproj}
\end{figure}

\subsection{An example in $\PP^1 \times \PP^1$} \label{ex:multihom}
We consider a system defined by two bivariate affine equations of bidegree $(9,9)$ and $(9,9)$, having 6  solutions `at infinity'. Three of the infinite solutions have an infinite $x_1$-coordinate and a finite $x_2$-coordinate (they are on the divisor $x_{10} = 0$), the others have an infinite $x_2$-coordinate. It takes Algorithm \ref{alg:multihom} about half a second to find all 162 solutions in $\PP^1 \times \PP^1$. The residuals are presented in Figure \ref{fig:multihom}, together with the absolute value of the coordinates of the solutions, dehomogenized with respect to $x_{10}$ and $x_{20}$ respectively.

\begin{figure}[ht!]
\centering
\input{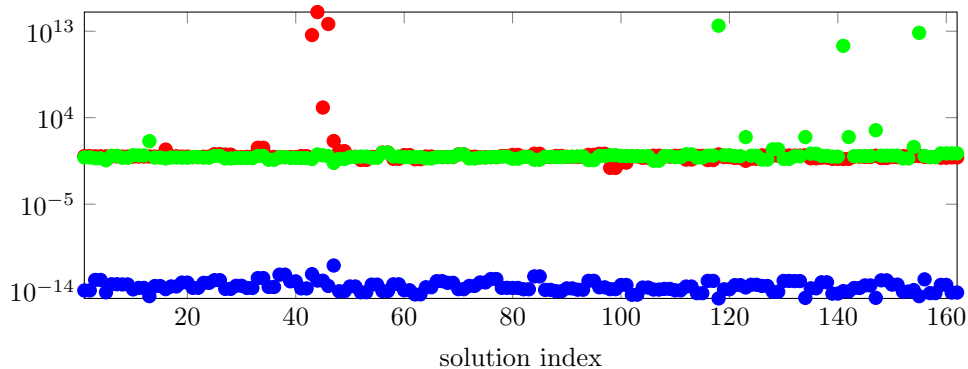}
\caption{Residual (\ref{bluedot}), absolute value of the $x_1$-components (\ref{reddots}) and absolute value of the $x_2$-components (\ref{greendots}) of all 162 numerical solutions of the problem described in Example \ref{ex:multihom}.}
\label{fig:multihom}
\end{figure}

\subsection{Comparison with homotopy solvers}
We compare the speed and accuracy of our method to that of the
homotopy continuation method implemented in PHCpack
\cite{verschelde1999algorithm} and Bertini
\cite{bates2013numerically}.
The current implementation of our method is in Matlab.
We have implemented the construction of the matrix $M$ in Fortran. We
call the routine from Matlab using a MEX file.
%an implementation in a faster language (e.g. C++, Fortran, \ldots) could speed up the 
%computations. 
An implementation in Julia has also been developed and 
is accessible at \url{https://gitlab.inria.fr/AlgebraicGeometricModeling/AlgebraicSolvers.jl}.

We use double precision for all computations and standard settings for Bertini and PHCpack apart from that. By a generic dense system of degree $d$ in $n$ variables we mean a set of $n$ polynomials in $\C[x_1, \ldots, x_n]$ supported in the monomials $x^\alpha$ of degree $\leq d$ with coefficients drawn from a normal distribution with mean zero and standard deviation 1. For the experiment we fix a value of $n$ and generate generic dense systems of increasing degree $d$ to use as input for the different solvers. 

Tables \ref{t21} up to \ref{t52} give detailed results from the experiment. The following notation is used in the tables. The number of solutions of the input system is $\D$ (in this case, $\D = d^n$). The numbers $m_1, m_2 = n_1, n_2$ give the sizes of $M$ and $N$ from the algorithms: $M^\top \in \C^{m_1 \times m_2}, N \in \C^{n_1 \times n_2}$. The maximal residual of the solutions computed by the algebraic solver of this paper is denoted by res. The number of solutions found by the different solvers is $\D_{\textup{alg}}, \D_{\textup{phc}}, \D_{\textup{brt}}$ for the algebraic solver, PHCpack and Bertini respectively. Since the homotopy methods use Newton refinement
intrinsically, their computed solutions give residuals of the order of
the unit roundoff. The values $t_M, t_N, t_B, t_S$ denote the time for the construction of the Macaulay matrix (Fortran), the computation of its null space, the computation of the basis via QR together with the construction of the multiplication matrices and the time to compute the simultaneous Schur decomposition respectively. The total computation times are $t_{\textup{alg}}, t_{\textup{phc}}$ and $t_{\textup{brt}}$ for the algebraic solver introduced in this paper ($t_{\textup{alg}} = t_M + t_N + t_B + t_S$), PHCpack and Bertini respectively. All timings are in seconds. Tables \ref{t21} and \ref{t22} present the experiment for $n = 2$ variables, Tables \ref{t31} and \ref{t32} for $n = 3$, Tables \ref{t41} and \ref{t42} for $n = 4$ and Tables \ref{t51} and \ref{t52} for $n = 5$.

We observe that our method has found numerical approximations for
\textit{all} $d^n$ roots, with a residual no larger than order $10^{-9}$. Due
to the quadratic convergence of Newton's iteration, one refining step
can be expected to result in a residual of the order of the unit
roundoff. Table \ref{t21} shows that for 2
variables, up to degree $d = 61$, our method is the fastest. For $n=3$
this is no longer the case but timings are comparable. For a larger
number of variables, the matrix $M$ in the algorithms becomes very
large and the null space computation is expensive, which makes the
algebraic method slower than the continuation solvers.

\begin{table}[h!]
	\centering
	\footnotesize
	\pgfplotstabletypeset[ % local config, applies only for this table
	every head row/.style={before row=\toprule,after row=\midrule},
	every last row/.style={after row=\bottomrule},
	columns = {Var1,nbsol,m1,m2,n2,res,nbsol_qr,nbsol_phc,nbsol_brt},
	columns/Var1/.style={int detect,column type=c|,column name=$d$},
	columns/nbsol/.style={int detect,column type=c,column name= $\D$},
	columns/m1/.style={column type=c,column name= $m_1$},
	columns/m2/.style={column type=c,column name= $m_2 {=} n_1$},
	columns/n2/.style={column type=c,column name= $n_2$},
	columns/res/.style={column type=c,column name= res},
	columns/nbsol_qr/.style={column type=c,column name= $\D_{\textup{alg}}$},
	columns/nbsol_phc/.style={column type=c,column name= $\D_{\textup{phc}}$},
	columns/nbsol_brt/.style={column type=c,column name= $\D_{\textup{brt}}$},
	]
	{T21.txt}
	\caption{Numerical results for PHCpack, Bertini and our method for dense systems in $n=2$ variables of increasing degree $d$. The table shows matrix sizes, accuracy and number of solutions.}
	\label{t21}
\end{table}

\begin{table}[h!]
	\centering
	\footnotesize
	\pgfplotstabletypeset[ % local config, applies only for this table
	every head row/.style={before row=\toprule,after row=\midrule},
	every last row/.style={after row=\bottomrule},
	columns = {Var1,time_mac,time_ker,time_basis,time_EV,time_qr,time_phc,time_brt},
	columns/Var1/.style={int detect,column type=c|,column name=$d$},
	columns/time_mac/.style={column type=c,column name= $t_M$},
	columns/time_ker/.style={column type=c,column name= $t_N$},
	columns/time_basis/.style={column type=c,column name= $t_B$},
	columns/time_EV/.style={column type=c,column name= $t_S$},
	columns/time_qr/.style={column type=c,column name= $t_{\textup{alg}}$},
	columns/time_phc/.style={column type=c,column name= $t_{\textup{phc}}$},
	columns/time_brt/.style={column type=c,column name= $t_{\textup{brt}}$},
	]
	{T22.txt}
	\caption{Timing results for PHCpack, Bertini and our method for dense systems in $n=2$ variables of increasing degree $d$.}
	\label{t22}
\end{table}

\begin{table}[h!]
	\centering
	\footnotesize
	\pgfplotstabletypeset[ % local config, applies only for this table
	every head row/.style={before row=\toprule,after row=\midrule},
	every last row/.style={after row=\bottomrule},
	columns = {Var1,nbsol,m1,m2,n2,res,nbsol_qr,nbsol_phc,nbsol_brt},
	columns/Var1/.style={int detect,column type=c|,column name=$d$},
	columns/nbsol/.style={int detect,column type=c,column name= $\D$},
	columns/m1/.style={column type=c,column name= $m_1$},
	columns/m2/.style={column type=c,column name= $m_2{=}n_1$},
	columns/n2/.style={column type=c,column name= $n_2$},
	columns/res/.style={column type=c,column name= res},
	columns/nbsol_qr/.style={column type=c,column name= $\D_{\textup{alg}}$},
	columns/nbsol_phc/.style={column type=c,column name= $\D_{\textup{phc}}$},
	columns/nbsol_brt/.style={column type=c,column name= $\D_{\textup{brt}}$},
	]
	{T31.txt}
	\caption{Numerical results for PHCpack, Bertini and our method for dense systems in $n=3$ variables of increasing degree $d$. The table shows matrix sizes, accuracy and number of solutions.}
	\label{t31}
\end{table}

\begin{table}[h!]
	\centering
	\footnotesize
	\pgfplotstabletypeset[ % local config, applies only for this table
	every head row/.style={before row=\toprule,after row=\midrule},
	every last row/.style={after row=\bottomrule},
	columns = {Var1,time_mac,time_ker,time_basis,time_EV,time_qr,time_phc,time_brt},
	columns/Var1/.style={int detect,column type=c|,column name=$d$},
	columns/time_mac/.style={column type=c,column name= $t_M$},
	columns/time_ker/.style={column type=c,column name= $t_N$},
	columns/time_basis/.style={column type=c,column name= $t_B$},
	columns/time_EV/.style={column type=c,column name= $t_S$},
	columns/time_qr/.style={column type=c,column name= $t_{\textup{alg}}$},
	columns/time_phc/.style={column type=c,column name= $t_{\textup{phc}}$},
	columns/time_brt/.style={column type=c,column name= $t_{\textup{brt}}$},
	]
	{T32.txt}
	\caption{Timing results for PHCpack, Bertini and our method for dense systems in $n=3$ variables of increasing degree $d$.}
	\label{t32}
\end{table}

\begin{table}[h!]
	\centering
	\footnotesize
	\pgfplotstabletypeset[ % local config, applies only for this table
	every head row/.style={before row=\toprule,after row=\midrule},
	every last row/.style={after row=\bottomrule},
	columns = {Var1,nbsol,m1,m2,n2,res,nbsol_qr,nbsol_phc,nbsol_brt},
	columns/Var1/.style={int detect,column type=c|,column name=$d$},
	columns/nbsol/.style={int detect,column type=c,column name= $\D$},
	columns/m1/.style={column type=c,column name= $m_1$},
	columns/m2/.style={column type=c,column name= $m_2{=}n_1$},
	columns/n2/.style={column type=c,column name= $n_2$},
	columns/res/.style={column type=c,column name= res},
	columns/nbsol_qr/.style={column type=c,column name= $\D_{\textup{alg}}$},
	columns/nbsol_phc/.style={column type=c,column name= $\D_{\textup{phc}}$},
	columns/nbsol_brt/.style={column type=c,column name= $\D_{\textup{brt}}$},
	]
	{T41.txt}
	\caption{Numerical results for PHCpack, Bertini and our method for dense systems in $n=4$ variables of increasing degree $d$. The table shows matrix sizes, accuracy and number of solutions.}
	\label{t41}
\end{table}

\begin{table}[h!]
	\centering
	\footnotesize
	\pgfplotstabletypeset[ % local config, applies only for this table
	every head row/.style={before row=\toprule,after row=\midrule},
	every last row/.style={after row=\bottomrule},
	columns = {Var1,time_mac,time_ker,time_basis,time_EV,time_qr,time_phc,time_brt},
	columns/Var1/.style={int detect,column type=c|,column name=$d$},
	columns/time_mac/.style={column type=c,column name= $t_M$},
	columns/time_ker/.style={column type=c,column name= $t_N$},
	columns/time_basis/.style={column type=c,column name= $t_B$},
	columns/time_EV/.style={column type=c,column name= $t_S$},
	columns/time_qr/.style={column type=c,column name= $t_{\textup{alg}}$},
	columns/time_phc/.style={column type=c,column name= $t_{\textup{phc}}$},
	columns/time_brt/.style={column type=c,column name= $t_{\textup{brt}}$},
	]
	{T42.txt}
	\caption{Timing results for PHCpack, Bertini and our method for dense systems in $n=4$ variables of increasing degree $d$.}
	\label{t42}
\end{table}

\begin{table}[h!]
	\centering
	\footnotesize
	\pgfplotstabletypeset[ % local config, applies only for this table
	every head row/.style={before row=\toprule,after row=\midrule},
	every last row/.style={after row=\bottomrule},
	columns = {Var1,nbsol,m1,m2,n2,res,nbsol_qr,nbsol_phc,nbsol_brt},
	columns/Var1/.style={int detect,column type=c|,column name=$d$},
	columns/nbsol/.style={int detect,column type=c,column name= $\D$},
	columns/m1/.style={column type=c,column name= $m_1$},
	columns/m2/.style={column type=c,column name= $m_2{=}n_1$},
	columns/n2/.style={column type=c,column name= $n_2$},
	columns/res/.style={column type=c,column name= res},
	columns/nbsol_qr/.style={column type=c,column name= $\D_{\textup{alg}}$},
	columns/nbsol_phc/.style={column type=c,column name= $\D_{\textup{phc}}$},
	columns/nbsol_brt/.style={column type=c,column name= $\D_{\textup{brt}}$},
	]
	{T51.txt}
	\caption{Numerical results for PHCpack, Bertini and our method for dense systems in $n=5$ variables of increasing degree $d$. The table shows matrix sizes, accuracy and number of solutions.}
	\label{t51}
\end{table}

\begin{table}[h!]
	\centering
	\footnotesize
	\pgfplotstabletypeset[ % local config, applies only for this table
	every head row/.style={before row=\toprule,after row=\midrule},
	every last row/.style={after row=\bottomrule},
	columns = {Var1,time_mac,time_ker,time_basis,time_EV,time_qr,time_phc,time_brt},
	columns/Var1/.style={int detect,column type=c|,column name=$d$},
	columns/time_mac/.style={column type=c,column name= $t_M$},
	columns/time_ker/.style={column type=c,column name= $t_N$},
	columns/time_basis/.style={column type=c,column name= $t_B$},
	columns/time_EV/.style={column type=c,column name= $t_S$},
	columns/time_qr/.style={column type=c,column name= $t_{\textup{alg}}$},
	columns/time_phc/.style={column type=c,column name= $t_{\textup{phc}}$},
	columns/time_brt/.style={column type=c,column name= $t_{\textup{brt}}$},
	]
	{T52.txt}
	\caption{Timing results for PHCpack, Bertini and our method for dense systems in $n=5$ variables of increasing degree $d$.}
	\label{t52}
\end{table}
An important note is that homotopy methods do not guarantee that all solutions are found. In fact, they lose some solutions for large systems. For $n = 2, d = 55$, Bertini gives up on 538 out of 3025 paths, so about 18\% of the solutions is not found (using default settings). For the same problem, PHCpack loses 2\% of the solutions. 

%\begin{figure}[h!]
%\centering
%\input{timings.tex}
%\caption{Computation time in seconds for Bertini (\ref{brt}), PHCpack (\ref{phc}) and our method (\ref{qr}) for the solution of dense systems of degree $d$ in $n = 2, \ldots, 5$ variables.}
%\label{fig:compare}
%\end{figure}

\section{Conclusion and future work}
We have proposed an algebraic framework for finding a representation of an Artinian quotient ring $R/I$ and we have shown how this leads to a numerical linear algebra method for solving square systems of polynomial equations with solutions in $\C^n, (\C^*)^n, \PP^n$ or $\PP^{n_1} \times \cdots \times \PP^{n_k}$. The choice of basis $\OO$ for $B$ is crucial for the numerical stability of the method. The experiments in Section \ref{sec:numexp} show that we obtain accurate results. The method guarantees, unlike homotopy solvers, that in exact arithmetic all solutions are found under some genericity assumptions. It is competitive in speed with Bertini and PHCpack for a small number of variables. Here are some ideas for future work. 

\begin{itemize}
\item The submatrix $\tilde{M}$ is generically of full rank, as proved by Macaulay. However, we observe that it has some small singular values for generic, large systems, $n \geq 3$. Therefore it has an ill conditioned null space and it is better to use the larger matrix $M$. If we can find a subset of the columns of $M$ that leads to a full rank matrix with `good' singular values, this could speed up the computations.
\item Sparse systems lead to a sparse matrix $M$. It might be useful to exploit this sparsity in the null space computation. 
\item An implementation in C++, Fortran, \ldots would speed up the
  algorithm, exploiting High Performance Computation optimisation. % mostly for the construction of $M$.
\item The method might be extended to ideals defining the union of a finite set of points and a positive dimensional component. 
\item When the dimension is bigger than $n=4$, most of the time is spent
  in computing the null space $N$. A cheaper construction of the map
  $N$ can be investigated.
\end{itemize}

%\newpage
\small
%\bibliography{references}
%\bibliographystyle{abbrv}
%\bibliographystyle{alpha}

\end{document}